 \theoremstyle{plain}
\newtheorem{thm}{Theorem}[section]
 \theoremstyle{definition}
  \newtheorem{example}[thm]{Example}
  \theoremstyle{definition}
  \newtheorem{defn}[thm]{Definition}
  \theoremstyle{remark}
  \newtheorem{rem}[thm]{Remark}
  \theoremstyle{plain}
  \newtheorem{lem}[thm]{Lemma}
\begin{document}

\title{Groupoid Methods in Wavelet Analysis}

\author{Marius Ionescu}

\author{Paul S. Muhly}

\date{\today}

\address{6188 Kemeny Hall, Dartmouth College Hanover, NH 03755-3551 }
\curraddr{438 Malott Hall, Cornell University, Ithaca, NY, 14853}

\email{mionescu@math.cornell.edu}

\address{The University of Iowa, Iowa City, IA 52242-1419}

\email{pmuhly@math.uiowa.edu}

\dedicatory{Dedicated to the memory of George W. Mackey}

\thanks{This work was partially supported by the National Science Foundation,
DMS-0355443.}

\keywords{groupoids, wavelets, fractals, $C^*$-algebras, Cuntz relations}

\subjclass[2000]{Primary 22A22, 42C40, 28A80, 46L89, 46L08, 46L55, 46L40; Secondary 58H99, 37F99, 32H50}

\begin{abstract}
We describe how the Deaconu-Renault groupoid may be used in the study
of wavelets and fractals.
\end{abstract}

\maketitle

\section{Introduction}

This note serves two purposes. First, we want to describe investigations
that we are undertaking which are inspired in large part by work of
Palle Jorgensen and his collaborators, particularly Ola Bratteli,
Dorin Dutkay and Steen Pedersen%
\footnote{A number of relevant papers are cited in the references to this paper,
but for a more comprehensive list, the reader should consult the books
\cite{BJ02} and \cite{pJ06}.%
}. In their papers one finds a rich theory of wavelets on the one hand
and topics in fractal analysis on the other. Further, the analysis
in these papers is laced with representations of the Cuntz relations
- finite families of isometries $\{ S_{i}\}_{i=1}^{n}$ such that
$\sum_{i=1}^{n}S_{i}S_{i}^{*}=1$. Very roughly speaking, these authors
show that much of the analysis of wavelets and fractals that has appeared
in recent years may be illuminated in terms of special representations
of the Cuntz relations. Indeed, some of the most important advances
are made by choosing an appropriate representation for these relations.
Our motivation was to understand the extent to which the use of the
Cuntz relations is \emph{intrinsic} to the situation under consideration.
We wanted to separate intrinsically occurring representations of the
Cuntz relations from those that are imposed by special choices. We
hoped, thereby, to clarify the degrees of freedom that go into the
representations found in the work we are discussing.

As it turns out, the Cuntz isometries that arise in the work of Jorgensen
et. al. may be expressed in terms of representations of the Deaconu-Renault
groupoid associated to an appropriate local homemorphism of a compact
Hausdorff space. Our second purpose is to show how the $C^{*}$-algebra
of this groupoid is related to a number of other $C^{*}$-algebras
that one can attach to a local homeomorphism. In particular, we show
that the $C^{*}$-algebra may be realized as a Cuntz-Pimsner algebra
in two different ways and that, in general, it is a quotient of certain
other $C^{*}$-algebras that one may build from the local homeomorphism.

Recall that a wavelet is usually understood to be a vector $\psi$
in $L^{2}(\mathbb{R})$ such that the family \[
\{ D^{j}T^{k}\psi\;:\: j,k\in\mathbb{Z}\}\]
is an orthonormal basis for $L^{2}(\mathbb{R})$, where $T$ is the
operator of translation by $1$, i.e., $T\xi(x)=\xi(x-1),$ $\xi\in
L^{2}(\mathbb{R}),$
and where $D$ is dilation by $2$, i.e., $D\xi(x)=\sqrt{2}\xi(2x)$.
One of the principal problems in the study of wavelets is to construct
them with various pre-assigned properties. That is, one wants to {}``tune''
the parameters that enter into the analysis of wavelets so that the
wavelet one constructs exhibits the prescribed properties. So one's
first task is to identify those parameters and to understand the relations
among them.

Fractals, on the other hand, are spaces that possess some sort of
scaling. That is, as is customarily expressed, fractals exhibit the
same features at all scales. How to make this statement precise and
how to construct such spaces in useful ways are, of course, the objects
of considerable research. Most of the known examples of fractals are
closely connected to spaces endowed with a local homeomorphism that
is not injective. This may seem like a banal oversimplification, but
reflection on it does lead to natural representations of the Cuntz
relations, as we shall see, that are intrinsic to the geometry of
the situation. Since wavelets have a natural scaling built into them,
it is natural contemplate the possibility of building natural wavelet-like
orthonormal bases in $L^{2}$-spaces erected on fractals. This is
indeed possible, and much of the work by Jorgensen and his co-authors
has been devoted to realizing the possibilities.

Our contribution is to observe that the Deaconu-Renault groupoid associated
with a local homeomorphism of a compact Hausdorff space provides a
natural environment in which to set up fractal analysis, and that
the $C^{*}$-algebras of the groupoids carry natural, geometrically
induced families of isometries. The representations that Jorgensen
and his collaborators study come from representations of this groupoid.
Further, wavelets and other orthonormal bases on the fractals are
seen to be artifacts of the representation theory of the groupoid.
In short, groupoids help to clarify constructions of both fractals
and wavelets and help to analyze the parameters involved.

\section{The setup}

Throughout this note, $X$ will denote a fixed compact Hausdorff space
and $T:X\rightarrow X$ will denote a surjective local homeomorphism.
One can relax these hypotheses in various ways and in various situations,
but we shall not explore the possibilities here. The principal examples
to keep in mind are the following.

\begin{example}
Let $X$ be the circle or torus $\mathbb{T}$. The local homeomorphism
in this case is also an endomorphism of the abelian group structure
on $\mathbb{T}$: $T(z)=z^{N},$ $z\in\mathbb{T}$, where $N$ is
a natural number. The case when $N=2$, provides a link to {}``classical''
wavelets. 
\end{example}

\begin{example}
Let $A$ be an $n\times n$ dilation matrix. That is, suppose $A$
has integer entries and that the determinant of $A$ has absolute
value $d$ (which must be a positive integer) that is greater than
$1$. If we view the $n$-torus, $\mathbb{T}^{n}$, as the quotient
group $\mathbb{R}^{n}/\mathbb{Z}^{n}$, then $A$ induces a local
homeomorphism $T:\mathbb{T}^{n}\rightarrow\mathbb{T}^{n}$ via the
formula $T(x+\mathbb{Z}^{n})=Ax+\mathbb{Z}^{n}$,
$x+\mathbb{Z}^{n}\in\mathbb{T}^{n}$.
It is not difficult to see that $T$ is $d$-to-$1$. 
\end{example}

\begin{example}
In this example we connect our discussion to the theory of iterated
function systems, which is one of the main ways to construct fractals
\cite{mB88}. Assume $X$ is a compact space endowed with a metric,
$d$ say, and let $(\varphi_{1},\dots,\varphi_{n})$ be a system of
maps on $X$ for which there are constants $c_{1}$ and $c_{2}$ such
that $0<c_{1}\leq c_{2}<1$ and such that $c_{1}d(x,y)\leq
d(\varphi_{i}(x),\varphi_{i}(y))\leq c_{2}d(x,y)$
for each $i$. Then each $\varphi_{i}$ is homeomorphism onto its
range. Also, the family $(\varphi_{1},\dots,\varphi_{n})$ induces
a map $\Phi$ on the space of non-empty closed (and hence compact)
subsets $K$ of $X$ via the formula \[
\Phi(K)=\cup_{i=1}^{n}\varphi_{i}(K).\]
It is then easy to see that $\Phi$ is a strict contraction in the
Hausdorff metric on the space of nonempty closed subsets of $X$ and
so there is a unique nonempty compact subset $K$ of $X$ such that
$\Phi(K)=K$. This $K$ is called \emph{the} invariant subset of the
system. It is the fractal associated with the system. We shall assume
that $X$ is the invariant set. It is important to note that there
may be overlap between $\varphi_{i}(X)$ and $\varphi_{j}(X)$ for
$i\ne j$. Consequently, the $\varphi_{i}$ need not be branches of
the inverse of a local homeomorphism. One way to {}``get around''
this limitation is to \emph{lift} the system in the sense of \cite[Page
155]{mB88}.
For this purpose, let $E^{\infty}$ the space of infinite words over
the alphabet $E=\{1,\dots,n\}$. Then in the product topology $E^{\infty}$
is compact and we can give $E^{\infty}$ a complete metric such that
the maps $\sigma_{i}:E^{\infty}\rightarrow E^{\infty}$ defined by
the formula $\sigma_{i}(w)=(i,w_{1},w_{2},\ldots)$, where
$w=(w_{1},w_{2},\ldots)$,
are contractions of the same type as the $\varphi_{i}$. The iterated
function system on $X\times E^{\infty}$ ,
$(\tilde{\varphi_{1}},\dots,\tilde{\varphi_{n}})$,
defined by the formula\[
\tilde{\varphi_{i}}(x,w)=(\varphi_{i}(x),\sigma_{i}(w))\]
 then has a unique nonempty closed invariant subset $\tilde{X}$ of
$X\times E^{\infty}$. That is
$\cup_{i=1}^{n}\tilde{\varphi_{i}}(\tilde{X})=\tilde{X}$.
The system $(\tilde{\varphi_{1}},\dots,\tilde{\varphi_{n}})$ on $\tilde{X}$
is called the \emph{lifted system.} The ranges of the $\tilde{\varphi_{i}}$
are disjoint and so there is a local homeomorphism $T$ of $\tilde{X}$
such that the $\tilde{\varphi_{i}}$ are the branches of the inverse
of $T$. As is discussed in Section 4.6 of \cite{mB88}, the
systems$(\varphi_{1},\dots,\varphi_{n})$
and $(\tilde{\varphi_{1}},\dots,\tilde{\varphi_{n}})$ share many
features in common and, from some points of view, are interchangeable. 
\end{example}

\section{The Deaconu-Renault Groupoid}

The Deaconu-Renault groupoid associated with the local homeomorphism
$T:X\to X$ is\[
G=\{(x,n,y)\in X\times\mathbb{Z}\times X\;:\; T^{k}(x)=T^{l}(y),n=k-l\}.\]
Two triples $(x_{1},n_{1},y_{1})$ and $(x_{2},n_{2},y_{2})$ are
composable if and only if $x_{2}=y_{1}$ and in that case,
$(x_{1},n_{1},y_{1})(x_{2},n_{2},y_{2})=(x_{1},n_{1}+n_{2},y_{2})$.
The inverse of $(x,n,y)$ is $(y,-n,x)$. A basis for the topology
on $G$ is given by the sets\[
Z(U,V,k,l):=\{(x,k-l,y)\in G:x\in U,y\in V\},\]
where $U$ and $V$ are open subsets of $X$ such that $T^{k}|U$,
$T^{l}|V$ are homeomorphisms and $T^{k}(U)=T^{l}(V)$. Thus $Z(U,V,k,l)$
is essentially the graph of $(T^{l}|_{V})^{-1}\circ(T^{k}|_{U})$
and is a $G$-set in the sense of Renault \cite{jR80}. The $G$-sets
form a pseudogroup $\mathfrak{G}$, viz. the pseudogroup of partial
homeomorphisms generated by $T$. The sets $Z(U,V,k,l)$ form a basis
for $\mathfrak{G}$. The groupoid $G$ is (isomorphic and homeomorphic
to) the groupoid of germs of $\mathfrak{G}$ precisely when the local
homeomorphism $T$ is essentially free, meaning that for no $m$ and
$n$ does $T^{m}=T^{n}$ on any open subset of $X$ \cite[Proposition 2.8]{jR00}.

The groupoid $G$ is $r$-discrete or \'{e}tale and so admits a Haar
system of counting measures. Consequently, we may define a $*$-algebra
structure on $C_{c}(G)$ as follows. For $f,g\in C_{c}(G)$ we
set\begin{eqnarray*}
f\ast g(x,k-l,y) & = & \sum f(x,m-n,z)\cdot g(z,(n+k)-(m+l),y),\end{eqnarray*}
 where the sum ranges over all $m$, $n$, and $z$ such that $T^{m}x=T^{n}z$,
and $T^{n+k}z=T^{m+l}y$, and we define \[
f^{*}(x,k-l,y)=\overline{f(y,l-k,x)}.\]
The algebra $C_{c}(G)$ can be completed to form a $C^{*}$-algebra,
denoted $C^{\ast}(G)$, in the norm\[
\Vert f\Vert:=\sup\Vert\pi(f)\Vert\]
where the supremum is taken over all $*$-homomorphisms of $C_{c}(G)$
into $B(H_{\pi})$ that are continuous with respect to the inductive
limit topology on $C_{c}(G)$ and the weak operator topology on $B(H_{\pi})$,
the algebra of operators on the Hilbert space of $\pi$, $H_{\pi}$.
We will discuss the representations of $C_{c}(G)$ more fully later,
but first we want to call attention to some special clopen relations
{}``on'' $X$. 

For fixed positive integers $m$ and $n$, we set $R_{n,m}:=\{(x,n-m,y)\in
G:T^{n}x=T^{m}y\}.$
Evidently, $R_{n,m}$ is a union of the basic sets $Z(U,V,m,n)$,
and so is open in $G$. It is also closed, since its complement is
open by virtue of being a union of sets of the form $Z(U,V,k,l)$,
with $(k,l)\ne(m,n)$. The sets $R_{n,m}$, with $m=n$, are of special
importance: $R_{0,0}$ may be identified with the diagonal $\Delta$
in $X\times X$, while for $k>0$, $R_{k,k}$ may be identified with
the relation $X\ast_{T^{k}}X:=\{(x,y):T^{k}(x)=T^{k}(x)\}$ in $X\times X$.
The $C^{*}$-algebra of $R_{k,k}$, $C^{*}(R_{k,k})$, which may be
identified with the closure of $C_{c}(R_{k,k})$ in $C^{*}(G)$, is
the cross sectional $C^{*}$-algebra of a matrix bundle over $X$
and, therefore, is a continuous trace $C^{*}$-algebra. (See \cite{aK85}
for a discussion of algebras of the form $C^{*}(R_{k,k})$.) The sequence
of inclusions $R_{0,0}\subset R_{1,1}\subset R_{2,2}\subset\cdots$
leads to the sequence of inclusions $C^{*}(R_{k,k})\subset C^{*}(R_{k+1,k+1})$,
$k=0,1,2,\ldots$, and, consequently, we see that if $R_{\infty}=\{(x,0,y)\;:\;
T^{n}x=T^{n}y\mbox{ for some }n\}=\bigcup R_{n,n}$
, then $ $$C^{\ast}(R_{\infty})$ is the inductive limit
$\underrightarrow{\lim}C^{\ast}(R_{n,n})$.
We note that $R_{\infty}$ is the kernel of the fundamental homomorphism
on $G$: $(x,n,y)\rightarrow n$, which implements the gauge automorphism
group $\{\gamma_{z}\}_{z\in\mathbb{T}}$ defined on $C_{c}(G)$ by
the formula $\gamma_{z}(f)(x,n,y)=z^{n}f(x,n,y)$. The algebra
$C^{*}(R_{\infty})$
is the fixed point algebra of $\{\gamma_{z}\}_{z\in\mathbb{T}}$,
also known as the \emph{core} of $C^{*}(G)$. For these things, and
more, we refer the reader to \cite{DKM01,jR00}.

It is a straightforward calculation, performed first by Deaconu \cite{vD95},
to see that the local homeomorphism $T$ on $X$ induces a $*$-endomorphism
$\alpha:C^{\ast}(R_{\infty})\to C^{\ast}(R_{\infty})$ defined by
the equation,\begin{equation}
\alpha(f)(x,0,y)=\frac{1}{\sqrt{\vert T^{-1}(Tx))\vert\vert
T^{-1}(Ty))\vert}}f(Tx,0,Ty),\label{eq:alphainfty}\end{equation}
 $f\in C_{c}(R_{\infty})$. Further, a similar calculation shows that
the function $S$ in $C_{c}(G)$ defined by the equation\begin{equation}
S(x,m-n,y)=\begin{cases}
\frac{1}{\sqrt{\vert T^{-1}(Tx)\vert}}, & \mbox{if }m=1,n=0,Tx=y,\\
0 & \mbox{otherwise,}\end{cases}\label{eq:isometry}\end{equation}
 is an isometry that implements $\alpha$ in the sense that\begin{equation}
\alpha(f)=SfS^{\ast},\label{eq:Stacey1}\end{equation}
 $f\in C_{c}(R_{\infty})$. In particular, observe that\begin{equation}
SS^{*}(x,k-l,y)=\frac{1}{\vert
T^{-1}(Tx)\vert}1_{R_{1,1}}(x,k-l,y).\label{eq:Stacey2}\end{equation}
 As we shall see, $S$ is the source of all the isometries in the
papers by Jorgensen et. al. It is an intrinsic feature of the $C^{*}$-algebra
that comes from the basic data: $X$ and the local homeomorphism $T$.
In fact, we have the following theorem, Theorem \ref{StaceyXpdRep},
that makes precise the assertion that $C^{\ast}(G)$ is the universal
$C^{\ast}$-algebra generated by $C^{*}(R_{\infty})$, $\alpha$,
and $S$. In fact, there are several different perspectives from which
to see how $C^{*}(G)$ is constructed from the space $X$ and local
homeomorphism $T$. We want to examine these and to compare them with
various approaches in the literature. Therefore the proof will be
given after further discussion. 

\begin{thm}
\label{StaceyXpdRep}Let $\tilde{\pi}:C^{\ast}(G)\to\mathcal{B}(H)$
be a $C^{*}$-representation. Define $\pi:C^{\ast}(R_{\infty})\to\mathcal{B}(H)$
by $\pi=\tilde{\pi}|_{C^{\ast}(R_{\infty})}$, and let $S_{+}=\tilde{\pi}(S)$.
Then 
\begin{enumerate}
\item $\pi(\alpha(f))=S_{+}\pi(f)S_{+}^{*}$; and 
\item $\pi(\mathcal{L}(f))=S_{+}^{\ast}\pi(f)S_{+}$, where
$\mathcal{L}(f)=S^{\ast}fS$
is the transfer operator associated with $\alpha$, \begin{equation}
\mathcal{L}(f)(x,0,y)=\frac{1}{\sqrt{\vert T^{-1}(x)\vert\vert
T^{-1}(y)\vert}}\sum_{\stackrel{Tu=x}{Tv=y}}f(u,0,v).\label{eq:transferinfty}
\end{equation}

\end{enumerate}
Conversely, given $(\pi,S_{+})$, where
$\pi:C^{\ast}(R_{\infty})\to\mathcal{B}(H)$
is a $C^{*}$-representation and $S_{+}$ is an isometry on $H$ such
that 1. and 2. are satisfied, then there is a unique representation
$\tilde{\pi}:C^{\ast}(G)\to\mathcal{B}(H)$ such that $\tilde{\pi}(f)=\pi(f)$
for all $f\in C^{\ast}(R_{\infty})$ and $\tilde{\pi}(S)=S_{+}$.
\end{thm}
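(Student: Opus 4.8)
This is immediate. By \eqref{eq:Stacey1} the identity $\alpha(f)=SfS^{*}$ holds in $C^{*}(G)$ for every $f\in C^{*}(R_{\infty})$, and by definition the transfer operator satisfies $\mathcal{L}(f)=S^{*}fS$ there as well. Since $\tilde\pi$ is a $*$-homomorphism, applying it to these two identities gives $\pi(\alpha(f))=\tilde\pi(S)\pi(f)\tilde\pi(S)^{*}=S_{+}\pi(f)S_{+}^{*}$ and $\pi(\mathcal{L}(f))=\tilde\pi(S)^{*}\pi(f)\tilde\pi(S)=S_{+}^{*}\pi(f)S_{+}$, which are precisely (1) and (2).

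\textbf{Uniqueness in the converse.} I would first record the grading that controls the algebra. The gauge action $\{\gamma_{z}\}$ decomposes $C^{*}(G)$ into spectral subspaces indexed by $\mathbb{Z}$, with degree-zero part equal to the core $C^{*}(R_{\infty})$. A short computation factors any $(x,n,y)\in G$ with $n>0$ as $(x,n,T^{n}x)(T^{n}x,0,y)$, the first factor lying in the support of $S^{n}$ and the second in $R_{\infty}$, and dually for $n<0$. Hence the degree-$n$ subspace is $\overline{S^{n}C^{*}(R_{\infty})}$ for $n\ge 0$ and $\overline{C^{*}(R_{\infty})S^{*|n|}}$ for $n<0$, so $C^{*}(G)$ is generated as a $C^{*}$-algebra by $C^{*}(R_{\infty})$ and $S$. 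Any $\tilde\pi$ with $\tilde\pi|_{C^{*}(R_{\infty})}=\pi$ and $\tilde\pi(S)=S_{+}$ is therefore determined on a dense $*$-subalgebra, which yields uniqueness.

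\textbf{Existence.} For a nondegenerate pair $(\pi,S_{+})$ I would route the construction through the universal (multiplicity-one) Stacey crossed product $\mathcal{A}:=C^{*}(R_{\infty})\rtimes_{\alpha}\mathbb{N}$, the universal $C^{*}$-algebra generated by a copy $\iota$ of $C^{*}(R_{\infty})$ and an isometry $s$ subject to $s\,\iota(a)\,s^{*}=\iota(\alpha(a))$. It is worth noting that hypothesis (2) is then automatic: taking $f$ to be the unit of $C^{*}(R_{\infty})$ in (1) and using $\alpha(1)=SS^{*}$ gives $\pi(SS^{*})=S_{+}S_{+}^{*}$, and applying (1) to $\mathcal{L}(f)$ together with $\alpha(\mathcal{L}(f))=SS^{*}fSS^{*}$ yields $S_{+}\pi(\mathcal{L}(f))S_{+}^{*}=S_{+}S_{+}^{*}\pi(f)S_{+}S_{+}^{*}$; cancelling the isometry $S_{+}$ on the left and $S_{+}^{*}$ on the right recovers (2). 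Thus the pairs satisfying (1) and (2) are exactly the Stacey-covariant pairs, and the universal property of $\mathcal{A}$ produces a representation $\rho:\mathcal{A}\to\mathcal{B}(H)$ with $\rho\circ\iota=\pi$ and $\rho(s)=S_{+}$. It then suffices to identify $\mathcal{A}$ with $C^{*}(G)$: since $(C^{*}(R_{\infty})\hookrightarrow C^{*}(G),\,S)$ is itself a Stacey-covariant pair, there is a canonical surjection $\Phi:\mathcal{A}\to C^{*}(G)$ with $\Phi\circ\iota=\mathrm{id}$ and $\Phi(s)=S$, and $\tilde\pi:=\rho\circ\Phi^{-1}$ will be the required representation once $\Phi$ is shown to be an isomorphism.

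\textbf{The main obstacle} is exactly the injectivity of $\Phi$, and I expect to handle it with a gauge-invariant uniqueness argument. The gauge action $z\mapsto(\iota\mapsto\iota,\ s\mapsto zs)$ on $\mathcal{A}$ is carried by $\Phi$ to $\{\gamma_{z}\}$ on $C^{*}(G)$, so by the gauge-invariant uniqueness theorem it is enough to prove $\Phi$ injective on the fixed-point algebra of $\mathcal{A}$. Here the covariance relation collapses every degree-zero monomial, $s^{m}\iota(a)s^{*m}=\iota(\alpha^{m}(a))$, so the fixed-point algebra of $\mathcal{A}$ is precisely $\iota(C^{*}(R_{\infty}))$, on which $\Phi$ agrees with the inclusion $C^{*}(R_{\infty})\hookrightarrow C^{*}(G)$ and is therefore injective (this also shows $\iota$ is faithful). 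Since the degree-zero part of $C^{*}(G)$ is the core $C^{*}(R_{\infty})$, the hypotheses of the uniqueness theorem are met and $\Phi$ is an isomorphism, completing the construction. The one point requiring genuine care is justifying that these normal-form reductions remain valid in the inductive-limit topology, so that the fixed-point algebra is no larger than $\iota(C^{*}(R_{\infty}))$; this is where the \'{e}tale structure of $G$ and the injectivity of $\alpha$ enter.
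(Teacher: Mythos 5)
Your forward direction coincides with the paper's (apply $\tilde{\pi}$ to the identities $\alpha(f)=SfS^{*}$ and $\mathcal{L}(f)=S^{*}fS$, which hold in $C^{*}(G)$), and your observation that condition 2 is redundant given condition 1 is correct and is not made in the paper --- though it needs $\pi(1)=1_{H}$ (nondegeneracy) in order to get $\pi(SS^{*})=S_{+}S_{+}^{*}$ from condition 1 with $f=1$. Your uniqueness paragraph is also essentially right, with one slip: since $Sf=\alpha(f)S$ in $C^{*}(G)$, coefficients move to the \emph{left} of $S$, so the degree-$n$ spectral subspace is $\overline{C^{*}(R_{\infty})S^{n}}$ (every degree-$n$ element $f$ factors as $(fS^{*n})S^{n}$, with $fS^{*n}$ in the core), not $\overline{S^{n}C^{*}(R_{\infty})}$; the conclusion that $C^{*}(R_{\infty})$ and $S$ generate is unaffected.

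The genuine gap is in the existence half, exactly where you located ``the main obstacle''. The general degree-zero monomial of $\mathcal{A}$ is not of the form $s^{m}\iota(a)s^{*m}$: pushing $s$'s to the right and $s^{*}$'s to the left using $s\iota(a)=\iota(\alpha(a))s$ and $s^{*}s=1$ reduces an arbitrary word to $s^{*m}\iota(a)s^{n}$, so the fixed-point algebra is the closed span of the elements $s^{*m}\iota(a)s^{m}$, and Stacey covariance imposes \emph{no} relation on $s^{*}\iota(a)s$. Hence the asserted collapse to $\iota(C^{*}(R_{\infty}))$ does not follow from the relations you invoke; and indeed the paper itself asserts, at the end of the filter-bank section, that $C^{*}(G)$ is a \emph{proper} quotient of $C^{*}(R_{\infty})\rtimes_{\alpha,\mathcal{L}}\mathbb{N}\simeq C^{*}(R_{\infty})\rtimes_{1}^{\alpha}\mathbb{N}$, which flatly contradicts the isomorphism $\Phi$ you claim, so an argument that does not engage this point cannot stand as written. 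Interestingly, your own redundancy trick supplies the missing relation: inside $C^{*}(R_{\infty})$ one has $PaP=SS^{*}aSS^{*}=S(S^{*}aS)S^{*}=\alpha(\mathcal{L}(a))$, where $P=\alpha(1)$ and $\mathcal{L}(C^{*}(R_{\infty}))\subseteq C^{*}(R_{\infty})$; therefore in $\mathcal{A}$, using $s^{*}=s^{*}ss^{*}$ and $ss^{*}=\iota(P)$, one gets $s^{*}\iota(a)s=s^{*}\iota(PaP)s=s^{*}s\,\iota(\mathcal{L}(a))\,s^{*}s=\iota(\mathcal{L}(a))$. With that, $s^{*m}\iota(a)s^{m}=\iota(\mathcal{L}^{m}(a))$, the fixed-point algebra is $\iota(C^{*}(R_{\infty}))$, and your gauge-invariant uniqueness argument does close --- but the repaired proof then shows $C^{*}(R_{\infty})\rtimes_{1}^{\alpha}\mathbb{N}\cong C^{*}(G)$, i.e., the theorem (whose converse, with condition 2 redundant, is precisely the order-one Stacey universal property) is incompatible with the paper's proper-quotient remark, and you would have to say explicitly which side gives way. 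The paper's own proof avoids this circle entirely by routing through the correspondence $M_{\mathcal{L}}$: given $(\pi,S_{+})$ it sets $\psi(\phi(a)\xi_{0}):=\pi(a)S_{+}$, notes that CP1--CP2 are your conditions 1--2, obtains CP3 from Theorem \ref{thm:ExelCP} (the approximation $a=\lim\sum_{i}a_{i}SS^{*}b_{i}$ established there is internal to $C^{*}(R_{\infty})$ and is preserved by $\pi$), and integrates through the isomorphism $\mathcal{O}(M_{\mathcal{L}})\cong C^{*}(G)$. The covariance condition CP3 is exactly the relation your Stacey route leaves unverified; your fixed-point collapse, once correctly justified, is its surrogate.
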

Recall, next, that if $A$ is a $C^{*}$-algebra, then a
$C^{*}$-\emph{correspondence
over} $A$ is an $A$-$A$-bimodule $E$ such that $E_{A}$ is a Hilbert
$C^{*}$-module and the left action is given by a $C^{*}$-homomorphism
$\phi$ from $A$ into the bounded adjointable operators on $E$ \cite{MS98},
$\mathcal{L}(E)$. We write $\mathcal{K}(E)$ for the space of compact
operators on $E$, i.e., $\mathcal{K}(E)$ is the closed linear span
of the operators $\xi\otimes\eta^{*}$, $\xi,\eta\in E$, defined
by the formula $\xi\otimes\eta^{*}(\zeta):=\xi\langle\eta,\zeta\rangle$,
and we write $J$ for the ideal $\phi^{-1}(\mathcal{K}(E))$ in $A$.
A \emph{Cuntz-Pimsner covariant representation} of $E$ in a $C^{*}$-algebra
$B$ is a pair $(\pi,\psi)$, where $\pi$ is a $C^{*}$-representation
of $A$ in $B$ and $\psi$ is a map from $E$ into $B$ such that 

\begin{enumerate}
\item $\psi(\phi(a)\xi b)=\pi(a)\psi(\xi)\pi(b)$, for all $a,b\in A$ and
all $\xi\in E.$;
\item for all $\xi,\eta\in E$,
$\psi(\xi)^{*}\psi(\eta)=\pi(\langle\xi,\eta\rangle)$;
and
\item for all $a\in J$, $(\psi,\pi)^{(1)}(\phi(a))=\pi(a)$, where
$(\psi,\pi)^{(1)}$
is the representation of $\mathcal{K}(E)$ in $B$ defined by the
formula $(\psi,\pi)^{(1)}(\xi\otimes\eta^{*})=\psi(\xi)\psi(\eta)^{*}$,
$\xi\otimes\eta^{*}\in\mathcal{K}(E)$. 
\end{enumerate}
There is a $C^{*}$-algebra $\mathcal{O}(E)$ and Cuntz-Pimsner representation
$(k_{A},k_{E})$ of $E$ in $\mathcal{O}(E)$ that is universal for
all Cuntz-Pimsner representations of $E$. That is, if $(\pi,\psi)$
is a Cuntz-Pimsner representation of $E$ in a $C^{*}$-algebra $B$,
then there is a unique $C^{*}$-representation $\rho$ of $\mathcal{O}(E)$
in $B$ such that $\rho\circ k_{A}=\pi$ and $\rho\circ k_{E}=\psi$
. The representation $\rho$ is often denoted $\pi\times\psi$. This
was proved essentially by Pimsner in \cite{mP97} and in the form
stated here in \cite[Proposition 1.3]{FMR03}. 

\begin{defn}
The \emph{Deaconu $C^{*}$-correspondence} $\mathcal{X}$ over the
$C^{\ast}$-algebra $C(X)$ is the completion of $C(X)$ under the
inner product\[
\langle\xi,\eta\rangle(x)=\frac{1}{\vert
T^{-1}(x)\vert}\sum_{Ty=x}\overline{\xi(y)}\eta(y),\]
 with the left and right actions of $C(X)$ given by $(a\cdot\xi\cdot
b)(x)=a(x)\xi(x)b(Tx)$. 

The definition we have given is slightly different from the one given
in \cite{vD99}. He does not divide by $\vert T^{-1}(x)\vert$. However,
it is easy to see that the two $C^{*}$-correspondences are isomorphic.
The following theorem is due to Deaconu \cite[Propositions 3.1 and 3.3]{vD99}.
The formulation we present is that of \cite[Theorem 7]{DKM01}, which
is slightly more general. The proof in \cite{DKM01} is based on the
gauge invariant uniqueness theorem found in \cite[Theorem 4.1]{FMR03}.
\end{defn}
\begin{thm}
\label{DeaconuIsoThm}(Deaconu) Define $\iota:C(X)\to C^{\ast}(G)$,
by the equation \[
\iota(\varphi)(x,k-l,y)=\varphi(x)1_{R_{0,0}}(x,k-l,y),\]
 and $\psi:\mathcal{X}\to C^{*}(G)$, by the equation $\psi(\xi)=\iota(\xi)S$.
Then $(i,\psi)$ is a faithful Cuntz-Pimsner covariant representation
of $(C(X),\mathcal{X})$ in $C^{\ast}(G)$, whose image generates
$C^{*}(G)$ and gives an isomorphism between $C^{*}(G)$ and
$\mathcal{O}(\mathcal{X})$. 
\end{thm}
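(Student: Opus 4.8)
The plan is to verify that $(\iota,\psi)$ satisfies the three axioms defining a Cuntz-Pimsner covariant representation, to use the universal property of $\mathcal{O}(\mathcal{X})$ to produce a surjection $\rho=\iota\times\psi\colon\mathcal{O}(\mathcal{X})\to C^{*}(G)$, and then to prove that $\rho$ is injective by appealing to the gauge-invariant uniqueness theorem \cite[Theorem 4.1]{FMR03}. I would first record that $\iota$ is a faithful $*$-homomorphism, since it identifies $C(X)$ with the functions in $C_{c}(G)$ supported on the diagonal $R_{0,0}$, where the convolution product collapses to pointwise multiplication and $\iota(\xi)^{*}=\iota(\overline{\xi})$. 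For axiom (1) the only delicate feature is the factor $b(Tx)$ in the right action $(a\cdot\xi\cdot b)(x)=a(x)\xi(x)b(Tx)$; this is absorbed by the commutation relation $S\,\iota(b)=\iota(b\circ T)\,S$, which follows from a direct convolution computation using the definition \eqref{eq:isometry} of $S$ and the description of $R_{0,0}$. Granting it, $\psi(a\cdot\xi\cdot b)=\iota(a)\iota(\xi)\iota(b\circ T)S=\iota(a)\iota(\xi)S\,\iota(b)=\iota(a)\psi(\xi)\iota(b)$. Axiom (2) is the identity $\psi(\xi)^{*}\psi(\eta)=S^{*}\iota(\overline{\xi}\eta)S=\mathcal{L}(\iota(\overline{\xi}\eta))$, where I invoke part (2) of Theorem \ref{StaceyXpdRep}; feeding the diagonal support of $\iota(\overline{\xi}\eta)$ into the transfer-operator formula \eqref{eq:transferinfty} forces $x=y$ and collapses the double sum to $\tfrac{1}{\vert T^{-1}(x)\vert}\sum_{Tu=x}\overline{\xi(u)}\eta(u)=\langle\xi,\eta\rangle(x)$, so that $\psi(\xi)^{*}\psi(\eta)=\iota(\langle\xi,\eta\rangle)$.

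The main obstacle, and the step that genuinely uses the local-homeomorphism hypothesis, is axiom (3). Here I would first show that the left action $\phi$ is injective and takes values in $\mathcal{K}(\mathcal{X})$, so that the ideal $J=\phi^{-1}(\mathcal{K}(\mathcal{X}))$ is all of $C(X)$ and no relative quotient intervenes. Because $T$ is a surjective local homeomorphism of a compact space, there is a finite open cover $\{U_{i}\}$ with each $T|_{U_{i}}$ a homeomorphism, and a subordinate partition of unity produces a finite Parseval-type frame $\{\xi_{i}\}\subset\mathcal{X}$ with $\sum_{i}\xi_{i}\otimes\xi_{i}^{*}=\phi(1)$, exhibiting $\phi(1)$, and hence every $\phi(a)=\sum_{i}(a\cdot\xi_{i})\otimes\xi_{i}^{*}$, as a compact operator. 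Using axiom (1) one then has $(\psi,\iota)^{(1)}(\phi(a))=\iota(a)\sum_{i}\psi(\xi_{i})\psi(\xi_{i})^{*}$, so the covariance relation reduces to $\sum_{i}\psi(\xi_{i})\psi(\xi_{i})^{*}=\iota(1)=1$, the unit of $C^{*}(G)$. This last equality is the operator incarnation of the normalization $\sum_{Tu=x}\vert T^{-1}(x)\vert^{-1}=1$ built into the inner product of $\mathcal{X}$, computed through the formula \eqref{eq:Stacey2} for $SS^{*}$; it is, pleasingly, the Cuntz relation $\sum S_{i}S_{i}^{*}=1$ in disguise. I expect this bookkeeping to be the most demanding part of the argument.

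With covariance established, the universal property yields $\rho=\iota\times\psi\colon\mathcal{O}(\mathcal{X})\to C^{*}(G)$. The image of $\rho$ contains $\iota(C(X))$ and $S=\psi(1)$, hence $S^{*}$ and all their products; since functions supported on a single bisection are approximated in the inductive-limit topology by finite sums of elements of the form $\iota(\varphi)S^{k}(S^{*})^{l}\iota(\varphi')$, and such sums are dense in $C_{c}(G)$, the map $\rho$ is surjective. Finally I would check gauge equivariance: the gauge automorphisms $\gamma_{z}(f)(x,n,y)=z^{n}f(x,n,y)$ fix $\iota(C(X))$ (supported where $n=0$) and scale $S$ by $z$ (supported where $n=1$), so $\gamma_{z}\circ\rho=\rho\circ\beta_{z}$ for the canonical gauge action $\beta$ on $\mathcal{O}(\mathcal{X})$. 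Since $\iota$ is faithful, $J=C(X)$, and $\rho$ is gauge-equivariant, the gauge-invariant uniqueness theorem \cite[Theorem 4.1]{FMR03} forces $\rho$ to be injective. Hence $\rho$ is an isomorphism and $(\iota,\psi)$ is the asserted faithful Cuntz-Pimsner covariant representation.
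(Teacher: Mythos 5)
Your proposal is correct and takes exactly the route the paper itself indicates: the paper gives no in-text proof but defers to Deaconu and to \cite[Theorem 7]{DKM01}, whose argument---verify the three covariance axioms (with $J=\phi^{-1}(\mathcal{K}(\mathcal{X}))=C(X)$ via a finite partition-of-unity frame, the same construction the paper sketches in its final section), obtain the surjection $\iota\times\psi$ from the universal property, and conclude injectivity from the gauge-invariant uniqueness theorem \cite[Theorem 4.1]{FMR03}---is precisely the one you reconstruct, including the computation that $\sum_i\psi(\xi_i)\psi(\xi_i)^{*}=\iota(1)$ via equation (\ref{eq:Stacey2}). One small caution: where you invoke part (2) of Theorem \ref{StaceyXpdRep}, be explicit that you are using only the elementary identity $\mathcal{L}(f)=S^{*}fS$ with formula (\ref{eq:transferinfty}), which is a direct convolution computation, since the converse half of that theorem is deduced in the paper from Theorem \ref{DeaconuIsoThm} itself and citing it wholesale would be circular.
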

In \cite{rE03}, Exel introduced a crossed product associated to an
endomorphism $\alpha$ of a $C^{*}$-algebra $A$ and transfer operator
$\mathcal{L}$ for $\alpha$. That is, $\mathcal{L}$ is a positive
operator on $A$ that satisfies the equation
$\mathcal{L}(a\alpha(b))=\mathcal{L}(a)b$
for all $a,b\in A$. Exel's crossed product, denoted
$A\rtimes_{\alpha,\mathcal{L}}\mathbb{N}$
$ $can also be described as a relative Cuntz-Pimsner algebra, as
was accomplished by Brownlowe and Raeburn in \cite{BR06}. We adopt
their perspective and assume also that $A$ is unital, but we don't
assume that $\alpha$ is unital. Let $M_{\mathcal{L}}$ denote the
completion of $A$ in the inner product $\langle
a,b\rangle:=\mathcal{L}(a^{*}b)$,
and give $M_{\mathcal{L}}$ the right and left actions of $A$ defined
by the formulae $m\cdot a:=m\alpha(a)$ and $a\cdot m=\phi(a)m=am$.
As a left $A$-module $M_{\mathcal{L}}$ is cyclic and the image of
$1$ in $M_{\mathcal{L}}$ is a cyclic vector, which we denote by
$\xi_{0}$. If $(\pi,\psi)$ is a Cuntz-Pimsner representation of
$M_{\mathcal{L}}$ in a $C^{*}$-algebra $B$, then the image of $\xi_{0}$
in $B$, $\psi(\xi_{0})$, is an isometry $V$, say. Then $(\pi,\psi)$
is completely determined by $\pi$ and $V$ in the following sense:
Let $\pi$ be a representation of $A$ in a $C^{*}$-algebra $B$,
let $V$ be an isometry in $B$, and define $\psi:M_{\mathcal{L}}\to B$
by the formula, $\psi(\phi(a)\xi_{0})=\pi(a)V$, then $(\pi,\psi)$
is a Cuntz-Pimsner representations of $M_{\mathcal{L}}$ in $B$ if
and only if the following equations CP1., CP2. and CP3. are satisfied:

\begin{enumerate}
\item [CP1.]$V\pi(a)=\pi(\alpha(a))V$ for all $a\in A$; 
\item [CP2.]$V^{*}\pi(a)V=\pi(\mathcal{L}(a))$ for all $a\in A$; and
\item [CP3.]$\pi(a)=(\psi,\pi)^{(1)}(\phi(a))$, for all $a\in J$.
\end{enumerate}
\begin{thm}
\label{thm:ExelCP}In the context of our groupoid, $G$, let
$A=C^{*}(R_{\infty})$,
let $\alpha$ be the endomorphism of $A$ defined by equation
(\ref{eq:alphainfty}),
let $\mathcal{L}$ be the associated transfer operator (\ref{eq:transferinfty})
and let $M_{\mathcal{L}}$ be the correspondence over $A$ defined
by Brownlowe and Raeburn that we just described. Then the identity
representation $\iota$ mapping $C^{*}(R_{\infty})$ into $C^{*}(G)$
together with the isometry $S$ defined by equation (\ref{eq:isometry}),
determine a Cuntz-Pimsner representation $(\iota,\psi)$ of $M_{\mathcal{L}}$
in $C^{*}(G)$ that implements an isomorphism of $\mathcal{O}(M_{\mathcal{L}})$
onto $C^{*}(G)$.
\end{thm}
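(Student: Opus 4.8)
The plan is to verify directly that the pair $(\iota,\psi)$, with $\psi(\phi(a)\xi_0):=\iota(a)S$, satisfies the three Cuntz-Pimsner relations CP1.--CP3. for $M_{\mathcal{L}}$, and then to invoke the gauge-invariant uniqueness theorem of \cite[Theorem 4.1]{FMR03} --- exactly as in the proof of Theorem \ref{DeaconuIsoThm} --- to conclude that the induced homomorphism $\iota\times\psi\colon\mathcal{O}(M_{\mathcal{L}})\to C^*(G)$ is an isomorphism.

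First I would check that $\psi$ is well defined and isometric. Since $\langle a,b\rangle=\mathcal{L}(a^*b)=S^*a^*bS$, a vector of length zero in $M_{\mathcal{L}}$ satisfies $\Vert aS\Vert^2=\Vert S^*a^*aS\Vert=\Vert\mathcal{L}(a^*a)\Vert=0$, so $a\mapsto aS$ descends to $M_{\mathcal{L}}$ and preserves inner products, which is CP2. Relations CP1. and CP2. are then immediate from Theorem \ref{StaceyXpdRep} applied to the identity representation: part (1) gives $\iota(\alpha(a))=SaS^*$, whence $\iota(\alpha(a))S=SaS^*S=S\iota(a)$ because $S^*S=1$, and part (2) gives $S^*\iota(a)S=\iota(\mathcal{L}(a))$. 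In particular the transfer-operator identity $\mathcal{L}(a\alpha(b))=\mathcal{L}(a)b$ drops out of $\alpha(b)=SbS^*$ and $S^*S=1$, so $M_{\mathcal{L}}$ really is the Brownlowe--Raeburn correspondence.

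The main obstacle is CP3. Because $X$ is compact and $T$ is a surjective local homeomorphism, the function $x\mapsto\vert T^{-1}(x)\vert$ is bounded and locally constant, so $M_{\mathcal{L}}$ is finitely generated and projective; in particular $\phi(1)=\mathrm{id}_{M_{\mathcal{L}}}$ lies in $\mathcal{K}(M_{\mathcal{L}})$ and $J=A$. The content of CP3. then collapses, by $A$-linearity of all three maps, to the single identity $(\psi,\iota)^{(1)}(\mathrm{id}_{M_{\mathcal{L}}})=\iota(1)=1$. Writing a finite Parseval frame $\{e_i\}$ for $M_{\mathcal{L}}$, so that $\sum_i\theta_{e_i,e_i}=\mathrm{id}_{M_{\mathcal{L}}}$, and setting $S_i:=\psi(e_i)$, one computes $(\psi,\iota)^{(1)}(\mathrm{id}_{M_{\mathcal{L}}})=\sum_i\psi(e_i)\psi(e_i)^*=\sum_iS_iS_i^*$. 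Thus CP3. is exactly the statement that the geometrically induced family $\{S_i\}$ satisfies the Cuntz completeness relation $\sum_iS_iS_i^*=1$ in $C^*(G)$. Verifying this last equality, using (\ref{eq:Stacey2}) to write $SS^*=\tfrac{1}{\vert T^{-1}(Tx)\vert}1_{R_{1,1}}$ and summing the branch contributions over a clopen partition subordinate to $T$, is the heart of the argument --- and it is precisely the intrinsic occurrence of the Cuntz relations that the paper advertises.

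With CP3. in hand, universality yields $\iota\times\psi$. Surjectivity is routine: the image is the $C^*$-subalgebra generated by $\iota(C^*(R_\infty))$ and $\psi(\xi_0)=S$, and since every $f\in C_c(G)$ that is homogeneous for the grading $(x,n,y)\mapsto n$ is a product of an element of $C^*(R_\infty)$ with a power of $S$ or of $S^*$ (using $S^*S=1$), these generators are dense in $C^*(G)$. For injectivity I would use that $\iota$ is faithful (it is literally the inclusion $C^*(R_\infty)\hookrightarrow C^*(G)$) and that $\iota\times\psi$ is equivariant for the gauge actions: the universal gauge action on $\mathcal{O}(M_{\mathcal{L}})$ fixes the image of $A$ and scales that of $M_{\mathcal{L}}$ by $z$, while $\gamma_z$ on $C^*(G)$ fixes $C^*(R_\infty)$ and sends $S\mapsto zS$, so the intertwining holds on generators. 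The gauge-invariant uniqueness theorem then forces $\iota\times\psi$ to be injective, completing the isomorphism.
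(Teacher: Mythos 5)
Your overall skeleton---verify CP1--CP3 for $(\iota,\psi)$ and finish with the gauge-invariant uniqueness theorem of \cite[Theorem 4.1]{FMR03}---is the same as the paper's, and your handling of CP1, CP2, surjectivity, and injectivity is essentially identical to the paper's. The divergence, and the genuine gap, is in CP3. First, the inference ``$\vert T^{-1}(x)\vert$ is bounded and locally constant, so $M_{\mathcal{L}}$ is finitely generated and projective'' is a non sequitur: that reasoning pertains to the Deaconu correspondence $\mathcal{X}$ over $C(X)$, whereas $M_{\mathcal{L}}$ is the completion of the entire core $A=C^{*}(R_{\infty})$ as a right Hilbert $A$-module. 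The vector $\xi_{0}$ is cyclic for the \emph{left} action, which says nothing about finite generation as a \emph{right} module, so the ``finite Parseval frame $\{e_{i}\}$ for $M_{\mathcal{L}}$'' you invoke is exactly the object that needs to be produced; as written, your CP3 argument is circular (a finite Parseval frame exists if and only if $\mathrm{id}_{M_{\mathcal{L}}}$ is compact, which is what you are trying to show). Second, the verification you sketch for $\sum_{i}S_{i}S_{i}^{*}=1$, ``summing the branch contributions over a clopen partition subordinate to $T$,'' is impossible in the paper's principal example $X=\mathbb{T}$, $Tz=z^{2}$: the circle is connected, so its only clopen partition is $\{\mathbb{T}\}$, on which $T$ is not injective. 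This is precisely why the paper stresses that $\mathcal{X}$ may fail to have an orthonormal basis and that what always exists is a \emph{frame} $\{m_{i}\}$ built from square roots of a continuous partition of unity subordinate to an open cover on whose members $T$ is injective. With such $m_{i}$ one does get $\sum_{i}\iota(m_{i})SS^{*}\iota(m_{i})^{*}=1$ (this is CP3 for the Deaconu correspondence, Theorem \ref{DeaconuIsoThm}, or a direct convolution computation from (\ref{eq:Stacey2})), and then the classes of $\iota(m_{i})$ in $M_{\mathcal{L}}$ form a Parseval frame, since $\sum_{i}e_{i}\otimes e_{i}^{*}$ acts as left multiplication by $1$. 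So your reduction is repairable, but the repair---transporting a frame from $\mathcal{X}$ to $M_{\mathcal{L}}$---is the actual content, and it is missing.

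By contrast, the paper's proof of CP3 computes nothing about $J$ and needs no frame: it uses that $\xi_{0}$ is cyclic for the left action, so that $\mathcal{K}(M_{\mathcal{L}})$ is the closed span of the operators $\phi(a)\xi_{0}\otimes\xi_{0}^{*}\phi(b)$; it approximates a compact $\phi(a)$ by finite sums of these, verifies $\iota(a)\iota(b)S=(\psi,\iota)^{(1)}(\phi(a))\iota(b)S$ by a chain of identities using $SS^{*}\iota(b_{i})\iota(b)S=S\iota(\mathcal{L}(b_{i}b))=\iota(\alpha\circ\mathcal{L}(b_{i}b))S$, and then invokes \cite[Lemma 4.4.1]{FMR03} to conclude that elements of $(\psi,\iota)^{(1)}(\mathcal{K}(M_{\mathcal{L}}))$ are determined by their action on the set $\{\iota(b)S\}$. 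That argument works for whatever the ideal $J$ turns out to be. One further caution: your conclusion $J=A$ does agree with a remark the paper makes after Theorem \ref{StaceyXpd2}, but note that once your frame argument is carried out it exhibits $1$ as an element of $\overline{A\alpha(A)A}$, which sits in tension with the paper's assertion that this ideal is proper; none of this affects Theorem \ref{thm:ExelCP} itself, but it shows your route asserts more than the theorem requires, and those extra assertions demand proof rather than assertion.
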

\begin{proof}
Equation CP.1 follows from equation (\ref{eq:Stacey1}) and equation
CP.2, which is the same as the second equation of Theorem \ref{StaceyXpdRep},
is a straightforward calculation. We need to verify equation CP.3.
Since $\xi_{0}$ is a cyclic vector for the left action of $A$ on
$M_{\mathcal{L}}$, $\mathcal{K}(M_{\mathcal{L}})$ is the closed
linear span of elements of the form $\phi(a)\xi_{0}\otimes\xi_{0}^{*}\phi(b)$,
where $a$ and $b$ range over $A$. So, if $\phi(a)$ is compact,
there is a sequence whose terms are of the form
$\sum_{i}\phi(a_{i})\xi_{0}\otimes\xi_{0}^{*}\phi(b_{i})$
that converges to $\phi(a)$ in $\mathcal{K}(M_{\mathcal{L}})$. So,
if we apply $\iota(a)$ to an element of the form
$\iota(b)S=\psi(\phi(b)\xi_{0})$,
then we may write the following equation \begin{eqnarray}
\iota(a)\iota(b)S & = &
\psi(\phi(a)\phi(b)\xi_{0})=\lim\psi(\sum_{i}\phi(a_{i})\xi_{0}\otimes\xi_{0}^{*
}\phi(b_{i})(\phi(b)\xi_{0}))\label{eq:defining}\\
 & = & \lim\sum_{i}\iota(a_{i})SS^{*}\iota(b_{i})\iota(b)S\nonumber \\
 & = & \lim\sum_{i}\iota(a_{i})S\iota(\mathcal{L}(b_{i}b))\nonumber \\
 & = & \lim\sum_{i}\iota(a_{i})\iota(\alpha\circ\mathcal{L}(b_{i}b))S\nonumber
\\
 & = &
\lim\sum_{i}(\psi,\iota)^{(1)}(\phi(a_{i})\xi_{0}\otimes\xi_{0}^{*}\phi(b_{i}
))(\iota(b)S)\nonumber \\
 & = & (\psi,\iota)^{(1)}(\phi(a))(\iota(b)S).\nonumber \end{eqnarray}
By \cite[Lemma 4.4.1]{FMR03} \begin{equation}
\psi(\xi\otimes\eta^{*}(\phi(b)\xi_{0}))=(\psi,\iota)^{(1)}(\xi\otimes\eta^{*}
)(\iota(b)S),\label{eq:definingbis}\end{equation}
which shows that for all $T\in\mathcal{K}(M_{\mathcal{L}}),$
$(\psi,\iota)^{(1)}(T)$
is determined by its values on elements of the form $\iota(b)S$.
Thus, equations (\ref{eq:defining}) and (\ref{eq:definingbis}) together
show that if $a\in J$, then $\iota(a)=(\psi,\iota)^{(1)}(\phi(a))$.
Thus $(\iota,\psi)$ is a Cuntz-Pimsner representation, the range
of which clearly generates $C^{*}(G)$. So all we need to show is
that $\iota\times\psi$ is injective. But this is immediate from the
injectivity of $\iota$, by the gauge-invariant uniqueness theorem
\cite[Theorem 4.1]{FMR03}. 
\end{proof}
\emph{Proof of Theorem \ref{StaceyXpdRep}.} The fact that conditions
1. and 2. of the theorem are satisfied is an easy calculation. The
{}``converse'' assertion follows from Theorem \ref{thm:ExelCP}
because, as is easily seen, if $(\pi,S_{+})$ are given, acting on
a Hilbert space $H$, say, then we obtain a Cuntz-Pimsner representation
$(\pi,\psi)$ of $M_{\mathcal{L}}$ by setting $\psi(\xi):=\pi(\xi)S_{+}$.
This representation ``integrates'' to give a $C^{*}$-representation
of $\mathcal{O}(\mathcal{X})$, which by Theorem \ref{DeaconuIsoThm}
is $C^{*}(G)$.  $\square$

\section{Filter Banks}

\begin{defn}
A family $\{ m_{i}\}_{i=1,\dots,N}\subseteq\mathcal{X}$ is called
a \emph{filter bank} if it is an orthonormal basis for $\mathcal{X}$. 
\end{defn}
This means that $\langle m_{i},m_{j}\rangle=0$ if $i\neq j$, and
$\langle m_{i},m_{i}\rangle=1$. Note that this last condition is
much stronger than asserting that each $m_{i}$ has norm $1$. In
general a module $\mathcal{X}$ need not have an orthonormal basis.
Even some modules built on $\mathbb{T}^{n}$ with the map $z\to Az$
may fail to have orthonormal bases. However, on $\mathbb{T}^{1}$
they exist.

\begin{defn}
If $\{ m_{i}\}_{i=1,\dots,N}$ is a filter bank, we call $m_{1}$
the \emph{low pass filter} and the rest \emph{high pass filters}. 
\end{defn}
One problem of great importance is to decide when a function $m$
in $\mathcal{X}$ satisfying $\langle m,m\rangle=1$ can be completed
to an orthonormal basis, i.e., when can such a function $m$ be viewed
as a low pass filter in a filter bank. This depends to a great extent
upon the underlying geometry of the situation under consideration,
as Packer and Rieffel have shown \cite{PR03,PR04}.

We note, too, that while we have been emphasizing the topological
situation, there is a Borel version of our analysis. In this situation
Borel orthonormal bases always exist and low pass filters can be completed
to a filter bank.

\begin{thm}
\label{StaceyXpd2}Define $\beta:C(X)\to C(X)$ by $\beta(f)=f\circ T$,
$f\in C(X)$, and adopt the notation of Theorem \ref{DeaconuIsoThm}.
The following assertions are valid in $C^{*}(G)$ : 
\begin{enumerate}
\item $\iota(\beta(a))S=S\iota(a)$, for $a\in C(X).$ 
\item If $\{ m_{1},\dots,m_{n}\}$ is a filter bank and if $S_{i}:=\psi(m_{i})$,
then $\{ S_{i}\}$ is a Cuntz family of isometries in $C^{*}(G)$
such that\[
\iota(\beta(a))S_{i}=S_{i}\iota(a).\]

\item For all $a\in C(X)$\begin{equation}
\iota(\beta(a))=\sum_{i=1}^{n}S_{i}\iota(a)S_{i}^{*}.\label{eq:inner}
\end{equation}

\end{enumerate}
\end{thm}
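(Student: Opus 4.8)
The plan is to verify each of the three assertions mostly by direct computation in the convolution algebra $C_c(G)$, exploiting the explicit formulas for $\iota$, $S$, and the correspondence maps that have already been established. For assertion (1), I would observe that $\beta(a) = a \circ T$ is exactly the right action of $a$ on the constant-like generator, and compute both $\iota(\beta(a))S$ and $S\iota(a)$ directly from equations (\ref{eq:isometry}) and the formula for $\iota$. Since $S$ is supported on $\{(x,1,y) : Tx = y\}$ and $\iota(a)$ is supported on the diagonal $R_{0,0}$, both products are supported on the same set $\{(x,1,Tx)\}$, and matching the scalar coefficients should reduce to the identity $a(Tx) = \beta(a)(x)$. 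This is the foundational intertwining relation and I expect it to fall out immediately.

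For assertion (2), I would use that $\psi(\xi) = \iota(\xi)S$ by Theorem \ref{DeaconuIsoThm}, so $S_i = \iota(m_i)S$. The Cuntz relations then follow from the Cuntz-Pimsner covariance properties (2) and (3) in the definition preceding Theorem \ref{DeaconuIsoThm}: orthonormality $\langle m_i, m_j\rangle = \delta_{ij}1$ gives $S_i^* S_j = \psi(m_i)^*\psi(m_j) = \iota(\langle m_i, m_j\rangle) = \delta_{ij}1$, so the $S_i$ are isometries with orthogonal ranges. The completeness relation $\sum_i S_i S_i^* = 1$ is the heart of the matter and is where I expect the main work to lie: it should come from property (3), the Cuntz-Pimsner relation $(\psi,\iota)^{(1)}(\phi(1)) = \iota(1)$, combined with the fact that a filter bank is an orthonormal \emph{basis}, which forces $\sum_i m_i \otimes m_i^* = \phi(1) = \mathrm{id}_{\mathcal{X}}$ in $\mathcal{K}(\mathcal{X})$. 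Applying $(\psi,\iota)^{(1)}$ to this identity yields $\sum_i S_i S_i^* = \iota(1) = 1$. To finish the intertwining $\iota(\beta(a))S_i = S_i\iota(a)$, I would write $\iota(\beta(a))S_i = \iota(\beta(a))\iota(m_i)S = \iota(\beta(a)m_i)S$ and use the module structure $(\beta(a)\cdot m_i)(x) = m_i(x)\beta(a)(x) = m_i(x)a(Tx)$, comparing with $S_i\iota(a) = \iota(m_i)S\iota(a)$ and invoking assertion (1) in the form $S\iota(a) = \iota(\beta(a))S$ applied to the scalar reindexing.

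For assertion (3), I would compute $\sum_i S_i\iota(a)S_i^* = \sum_i \iota(m_i)S\iota(a)S^*\iota(m_i)^*$. Using assertion (1) to push $\iota(a)$ through $S$, or more cleanly using $(\psi,\iota)^{(1)}$ applied to $\sum_i \phi(\beta(a))(m_i \otimes m_i^*) = \phi(\beta(a))$, the sum telescopes to $(\psi,\iota)^{(1)}(\phi(\beta(a))) = \iota(\beta(a))$, where the last equality holds because $\phi(\beta(a))$ lies in the image of compacts (indeed $\beta(a)$ acts as a genuine compact operator since $\mathcal{X}$ has finite rank as witnessed by the filter bank) and the covariance condition (3) applies. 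The cleanest route is to recognize that each of (1), (2), (3) is the groupoid-algebra shadow of a Cuntz-Pimsner covariance identity, so the filter bank simply furnishes the orthonormal frame expressing the identity operator on $\mathcal{X}$ as $\sum_i m_i \otimes m_i^*$.

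The main obstacle I anticipate is the justification that $\phi(1)$ (and hence $\phi(\beta(a))$) genuinely lands in $\mathcal{K}(\mathcal{X})$, i.e.\ that $1 \in J$, so that the covariance condition (3) may be invoked to convert $(\psi,\iota)^{(1)}(\phi(\cdot))$ back into $\iota(\cdot)$. This is precisely the point where the \emph{basis} hypothesis (as opposed to merely a Parseval-type frame, or a single unit vector) is essential: the existence of a finite orthonormal basis $\{m_i\}$ exhibits $\mathrm{id}_{\mathcal{X}} = \sum_i m_i\otimes m_i^*$ as a compact operator and identifies it with $\phi(1)$, which both puts $1$ in $J$ and supplies the partition of unity driving the Cuntz completeness relation. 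Once that identification is in hand, the three assertions are routine applications of the universal Cuntz-Pimsner relations.
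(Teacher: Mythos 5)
Your proposal is correct, and in fact it supplies details that the paper itself omits: the authors state only that ``the proof of Theorem \ref{StaceyXpd2} is a straightforward calculation'' and give no argument, so there is no written proof to compare against line by line. Your route---a direct convolution computation in $C_c(G)$ for (1), then the universal Cuntz--Pimsner relations from Theorem \ref{DeaconuIsoThm} for (2) and (3)---checks out at every step. The computation in (1) is right: both $\iota(\beta(a))S$ and $S\iota(a)$ are supported on $\{(x,1,Tx)\}$ with common value $a(Tx)/\sqrt{\vert T^{-1}(Tx)\vert}$. Your identification of the crux of (2) is also exactly right: orthonormality plus covariance property (2) gives $S_i^*S_j=\iota(\langle m_i,m_j\rangle)=\delta_{ij}1$, while the completeness relation $\sum_i S_iS_i^*=1$ requires the reconstruction identity $\mathrm{id}_{\mathcal{X}}=\sum_i m_i\otimes m_i^*=\phi(1)$, the observation that this exhibits $1\in J$ (finite rank, hence compact---indeed here $J=C(X)$ in any case, since a frame always exists via a partition of unity, as the paper notes in its final section), and then covariance condition (3) applied at $a=1$. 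Two small streamlinings you might notice: first, assertion (1) is itself the case $\xi=1$ of the covariance relation $\psi(\phi(b)\xi a)=\iota(b)\psi(\xi)\iota(a)$, since $\psi(1)=\iota(1)S=S$, so the whole theorem can be run uniformly through the Cuntz--Pimsner machinery without a separate hands-on computation; second, once (2) is in hand, assertion (3) is a one-line consequence, $\sum_i S_i\iota(a)S_i^*=\iota(\beta(a))\sum_i S_iS_i^*=\iota(\beta(a))$, so your second appeal to the covariance condition via $(\psi,\iota)^{(1)}(\phi(\beta(a)))$, while valid, is not needed. The only point worth making explicit in a final write-up is the standard fact that a finite orthonormal generating set in a Hilbert module automatically satisfies the reconstruction formula $\xi=\sum_i m_i\langle m_i,\xi\rangle$ (check it on the dense span $\sum_j m_j\cdot C(X)$ and extend by continuity), which is what licenses $\sum_i m_i\otimes m_i^*=\mathrm{id}_{\mathcal{X}}$.
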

The proof of Theorem \ref{StaceyXpd2} is a straightforward calculation
and so will be omitted. Nevertheless, there are several useful points
to be raised about the result.

Suppose, quite generally, that $A$ is a $C^{*}$-algebra and that
$\alpha$ is an endomorphism of $A$. Then the powers of $\alpha$
can be used to build an inductive system $(\{
A_{n}\}_{n=0}^{\infty},\{\alpha_{m,n}\}_{m\geq n})$
in a familiar fashion: one takes $A_{n}$ to be $A$ for every $n$
and sets $\alpha_{m,n}:=\alpha^{m-n}$, when $m\geq n$. The inductive
limit of this system, $A_{\infty}$, exists, but may be zero. In the
event the limit is not $0$, then, as Stacey proves in Proposition
3.2 of \cite{pS93}, there is, for each positive integer $n$, a unique
$C^{*}$-algebra $B$ and a pair $(\iota,\{ t_{i}\}_{i=1}^{n})$ consisting
of a $*$-homomorphism $\iota:A\to B$ such that
$\overline{\iota}(1_{M(A)})=1_{M(B)}$,
where $\overline{\iota}$ denotes the extension of $\iota$ to the
multiplier algebra of $A$, $M(A)$, and a family of isometries in
the multiplier algebra of $B$, $\{ t_{i}\}_{i=1}^{n}\subseteq M(B)$,
such that

\begin{enumerate}
\item $\{ t_{i}\}_{i=1}^{n}$ is a Cuntz family of isometries, i.e.,
$t_{i}^{*}t_{j}=\delta_{ij}1_{M(B)}$,
for $i,j=1,2,\ldots,n$, and $\sum_{i=1}^{n}t_{i}t_{i}^{*}=1_{M(B)}$.
When $n=1$, $t=t_{1}$ is simply an isometry.
\item For all $a\in A$, $\iota(\alpha(a))=\sum_{i=1}^{n}t_{i}\iota(a)t_{i}^{*}$.
\item If $(\pi,\{ T_{i}\}_{i=1}^{n})$ is a family consisting of a
$C^{*}$-representation
of $A$ on a Hilbert space $H$ and a Cuntz family of isometries $\{
T_{i}\}_{i=1}^{n}$
in $B(H)$, then there is a nondegenerate representation $(\pi\times T)$
of $B$ on $H$ so that $(\pi\times T)\circ\iota=\pi$ and $(\pi\times
T)(t_{i})=T_{i}$,
$i=1,2\ldots,n$. (The family $(\pi,\{ T_{i}\}_{i=1}^{n})$ is called
a \emph{Cuntz-covariant representation of order} $n$ of the system
$(A,\alpha)$.) 
\item $B$ is the $C^{*}$-algebra generated by $\iota(A)$ and elements
of the form $\iota(a)t_{i}$, $i=1,2,\ldots,n$ and $a\in A$. 
\end{enumerate}
\begin{defn}
The $C^{*}$-algebra $B$ just described is called \emph{the Stacey
crossed product of order $n$ determined by $A$ and $\alpha$}, and
is denoted $A\rtimes_{n}^{\alpha}\mathbb{N}$. \end{defn}

Note that when $n=1$, the endomorphism in a Stacey crossed product
of order $1$ cannot be unital if the embedding $\iota$ is injective.
This happens if and only if there is a Cuntz-covariant representation
$(\pi,T)$ of order $1$ with a faithful $\pi$.$ $ In the setting
of Theorem \ref{StaceyXpdRep}, it is clear that $\alpha$ is not
unital by virtue of equation (\ref{eq:Stacey1}). Also, by virtue
of equation 1. in the statement of that theorem it is natural to speculate
about the relation between $C^{*}(G)$ and the Stacey crossed product
of order $1$ determined by $C^{*}(R_{\infty})$ and $\alpha$. It
turns out that the crossed product that Exel would associate to
$C^{*}(R_{\infty})$,
$\alpha$, and $\mathcal{L}$, in \cite{rE03} and which he would
denote by $C^{*}(R_{\infty})\rtimes_{\alpha,\mathcal{L}}\mathbb{N}$,
is isomorphic to $C^{*}(R_{\infty})\rtimes_{1}^{\alpha}\mathbb{N}$
by his \cite[Theorem 4.7]{rE03}. On the other hand, Brownlowe and
Raeburn show that Exel's algebra
$C^{*}(R_{\infty})\rtimes_{\alpha,\mathcal{L}}\mathbb{N}$
is isomorphic to the relative Cuntz-Pimsner algebra determined the
ideal $\overline{A\alpha(A)A}\cap J$, where $A=C^{*}(R_{\infty})$%
\footnote{If $E$ is a $C^{*}$-correspondence over a $C^{*}$-algebra $A$
and if $K$ is an ideal in $J$, then the \emph{relative Cuntz-Pimsner
algebra} determined by $K$, $\mathcal{O}(K;E)$, is the universal
$C^{*}$-algebra for representations of $E$, $(\pi,\psi)$, that
have all the properties of a Cuntz-Pimsner representation except that
the equation $\pi(a)=(\psi,\pi)^{(1)}(\phi(a))$ is assumed to hold
only for $a\in K$. See \cite{MS98} and \cite{FMR03}, where the
basic theory of such algebras is developed.%
}. Now in this situation $J$ coincides with $A$ because $\phi(1)=\phi(P)$,
where $P=SS^{*}$, and because $\phi(P)=\xi_{0}\otimes\xi_{0}^{*}$.
On the other hand, the ideal $\overline{A\alpha(A)A}$ is proper.
Thus, the relative Cuntz-Pimsner algebra determined by
$\overline{A\alpha(A)A}\cap J$
has the Cuntz-Pimsner algebra $\mathcal{O}(M_{\mathcal{L}})$ as a
proper quotient, by \cite[Proposition 3.14]{FMR03}. So, in our setting,
we see that $C^{*}(G)$ is a proper quotient of
$C^{*}(R_{\infty})\rtimes_{\alpha,\mathcal{L}}\mathbb{N}\simeq
C^{*}(R_{\infty})\rtimes_{1}^{\alpha}\mathbb{N}$.
On the other hand, Theorem \ref{StaceyXpd2} suggests that $C^{*}(G)$
may be the Stacey crossed product $C(X)\rtimes_{n}^{\beta}\mathbb{N}$,
but we are unable to determine the precise circumstances under which
this may happen. Nevertheless, as Theorem \ref{StaceyXpd2} shows,
$C^{*}(G)$ contains a Cuntz covariant representation of order $n$
of $(C(X),\beta)$, and therefore any $C^{*}$-representation of $C^{*}(G)$
produces automatically a Cuntz-covariant representation of $(C(X),\beta)$.
These are the starting point of Bratteli and Jorgensen's analysis
\cite[Proposition 1.1]{BJ97}.

\section{Representations of $C^{*}(G)$}

Renault worked out the structure theory of the most general representation
of any groupoid $C^{*}$-algebra in \cite{jR87}. We discuss here
certain aspects of it in our special setting that is relevant for
applications to wavelets. Let $\pi:C^{*}(G)\to\mathcal{B}(\mathcal{H})$
be a $C^{*}$-representation, where $G$ for the moment is an arbitrary
locally compact groupoid with Haar system $\{\lambda^{u}\}_{u\in G^{(0)}}$.
Then $\pi$ determines and is determined by a triple $(\mu,\mathcal{H},U)$,
where $\mu$ is a quasi-invariant measure on $G^{(0)}=X$; $\mathcal{H}$
is a (Borel) Hilbert bundle on $X$, and $U$ is a representation
of $G$ on $\mathcal{H}$. The relation between $\pi$ and the triple
$(\mu,\mathcal{H},U)$ is expressed through the equation \[
\pi(f)\xi(u)=\int_{G^{u}}f(\gamma)(U(\gamma)\xi(s(\gamma)))\Delta^{\frac{1}{2}}
(\gamma)\, d\lambda^{u}(\gamma),\]
where $\xi$ is an $L^{2}(\mu)$-section of the bundle $\mathcal{H}$
and $\Delta$ is the modular function of the measure $\mu.$ In more
detail, let $\nu=\int_{G^{(0)}}\lambda^{u}\, d\mu(u)$ and let $\nu^{-1}$
be the image of $\nu$ under inversion. Then to say $\mu$ is quasi-invariant
is to say that $\nu$ and $\nu^{-1}$ are mutually absolutely continuous.
In this case, $\Delta$ is defined to be $\frac{d\nu^{-1}}{d\nu}$. 

Specializing now to the setting where our groupoid $G$ is the Deaconu-Renault
groupoid associated to the local homeomorphism $T$ on the compact
Hausdorff space $X$, it is not difficult to see that the measure
$\mu$ is quasi-invariant in the fashion just described if and only
if $\mu\circ T^{-1}\ll\mu$. In this event, if we let $D$ denote
the Radon-Nikodym derivative $\frac{d\mu\circ T^{-1}}{d\mu}$, then
the modular function $\Delta$ is given by the equation \[
\Delta(x,m-n,y)=\frac{D(x)D(Tx)\cdots D(T^{m-1}x)}{D(y)D(Ty)\cdots
D(T^{n-1}y)}.\]

A measurable function $D$ defines also a transfer operator
$\mathcal{L}_{D}^{*}:M(X)\to M(X)$
by the equation\begin{equation}
\mathcal{L}_{D}^{*}(\mu)(f):=\int_{X}\sum_{Ty=x}D(y)f(y)\,
d\mu(x).\label{eq:Transfer}\end{equation}
The relevance of the transfer operator $\mathcal{L}_{D}^{*}$ to our
situation was established by Renault in \cite[Theorem 7.1]{jR05}
and \cite[Proposition 4.2]{jR03}. We state a slightly modified version
of his results.

\begin{thm}
(Renault) Let $\mu$ be a probability measure on $X$. Then $\mu$
is quasi-invariant with respect to $G$ and admits $\Delta$ as Radon-Nikodym
derivative if and only if $\mathcal{L}_{D}^{*}(\mu)=\mu$. 
\end{thm}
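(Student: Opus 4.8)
The plan is to convert the global quasi-invariance identity $\nu^{-1}=\Delta\nu$ on $G$ into a family of Radon--Nikodym conditions on the unit space $X$, and then to isolate the single generating $G$-set whose condition is exactly $\mathcal{L}_{D}^{*}(\mu)=\mu$. First I would record the two measures explicitly (writing $r,s$ for the range and source maps $r(x,n,y)=x$, $s(x,n,y)=y$): since $G$ is \'etale with Haar system of counting measures, $\nu=\int_{X}\lambda^{x}\,d\mu(x)$ is the measure $f\mapsto\int_{X}\sum_{r(\gamma)=x}f(\gamma)\,d\mu(x)$, and its image $\nu^{-1}$ under inversion is $f\mapsto\int_{X}\sum_{s(\gamma)=x}f(\gamma)\,d\mu(x)$. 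Thus $\nu^{-1}=\Delta\nu$ is the requirement that
\[
\int_{X}\sum_{s(\gamma)=x}f(\gamma)\,d\mu(x)=\int_{X}\sum_{r(\gamma)=x}\Delta(\gamma)f(\gamma)\,d\mu(x)
\]
for all $f\in C_{c}(G)$. Because every such $f$ may be written, via a partition of unity, as a finite sum of functions each supported in a single basic $G$-set $Z(U,V,k,l)$, on which $r$ and $s$ are homeomorphisms onto $U$ and $V$, this single identity is equivalent to the collection of scalar identities obtained by restricting to each such $Z$.

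Next I would exploit that $\Delta$ is a groupoid homomorphism into the multiplicative group $\mathbb{R}_{+}^{*}$. On a $G$-set $Z=Z(U,V,k,l)$, with associated partial homeomorphism $\tau_{Z}=(T^{l}|_{V})^{-1}\circ(T^{k}|_{U})$, the restricted identity says precisely that $(\tau_{Z})_{*}(\mu|_{V})$ is absolutely continuous with respect to $\mu|_{U}$ with Radon--Nikodym derivative equal to $\Delta$ read along $Z$. Since the pseudogroup $\mathfrak{G}$ is generated, under composition and inversion, by the one-step graphs $Z(U,TU,1,0)=\{(x,1,Tx):x\in U\}$ — any $(x,k-l,y)$ factors as a product of such generators and their inverses along $x\to\cdots\to T^{k}x=T^{l}y\leftarrow\cdots\leftarrow y$ — and since the family of $G$-sets carrying the correct derivative is closed under composition and inversion, the chain rule for derivatives matching exactly the multiplicativity $\Delta(\gamma_{1}\gamma_{2})=\Delta(\gamma_{1})\Delta(\gamma_{2})$, it suffices to verify the condition on this one-step family. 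This cocycle bookkeeping is what legitimizes the reduction to one step, and it is where the explicit product formula for $\Delta$ enters.

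It then remains to compute the one-step condition. On $Z_{0}=\{(x,1,Tx):x\in U\}$ one has $r(Z_{0})=U$, $s(Z_{0})=TU$, $\tau_{Z_{0}}=(T|_{U})^{-1}$, and the restricted identity reads
\[
\int_{TU}\phi\bigl((T|_{U})^{-1}x\bigr)\,d\mu(x)=\int_{U}\Delta(x,1,Tx)\,\phi(x)\,d\mu(x)
\]
for all $\phi$ supported in $U$. Covering the compact space $X$ by finitely many open sets on which $T$ is injective (possible since $T$ is a local homeomorphism) and summing these branchwise identities against a subordinate partition of unity collapses the left-hand sides into a single sum over all $T$-preimages; once the one-step value of $\Delta$ is identified with the weight $D$ (in the convention fixing the direction of the derivative), this produces exactly $\int_{X}\sum_{Ty=x}D(y)f(y)\,d\mu(x)=\int_{X}f\,d\mu$, that is, $\mathcal{L}_{D}^{*}(\mu)=\mu$. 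Conversely, restricting the global relation $\mathcal{L}_{D}^{*}(\mu)=\mu$ to functions supported in a single branch recovers the one-step Radon--Nikodym condition, which by the closure property of the preceding paragraph reassembles into $\nu^{-1}=\Delta\nu$, giving quasi-invariance with modular function $\Delta$.

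The main obstacle is the reduction of the second paragraph: verifying that the class of $G$-sets carrying the correct Radon--Nikodym derivative is closed under composition and inversion, so that a condition checked only on the one-step generators propagates to all of $G$. This is a chain-rule computation that must be matched, term by term, with the multiplicative structure $\Delta(x,m-n,y)=\prod_{i}D(T^{i}x)/\prod_{j}D(T^{j}y)$; getting the weights and the \emph{direction} of the derivative to agree — equivalently, tracking the $r$–$s$ and $\nu$–$\nu^{-1}$ conventions consistently so that the one-step value of $\Delta$ pairs correctly with $D$ — is the genuinely delicate point. The remaining measurability bookkeeping (the finite branch cover, the partition of unity, and the reversibility of the passage between the per-branch statements and the summed transfer-operator statement) is routine once the one-step identification is in hand.
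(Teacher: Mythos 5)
The paper offers no proof of this theorem: it is quoted from Renault (\cite[Theorem 7.1]{jR05} and \cite[Proposition 4.2]{jR03}, ``slightly modified''), so there is nothing to match line by line, and your skeleton --- explicit formulas for $\nu$ and $\nu^{-1}$ in the \'etale case, partition-of-unity reduction to basic $G$-sets, reduction to the one-step graphs via the multiplicativity of $\Delta$ and the chain rule for Radon--Nikodym derivatives, branchwise summation against a partition of unity subordinate to a finite branch cover, and the converse by localizing $\mathcal{L}_{D}^{*}(\mu)=\mu$ to a single branch (where the preimage sum has at most one term) --- is exactly the standard route and is sound in architecture. (A small slip: under your own convention $\tau_{Z}=(T^{l}|_{V})^{-1}\circ(T^{k}|_{U})$, one has $\tau_{Z_{0}}=T|_{U}:U\to TU$, not $(T|_{U})^{-1}$, though the integral identity you display is the correct one.)

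The genuine gap is precisely the point you twice flag as ``delicate'' and then defer with ``once the one-step value of $\Delta$ is identified with the weight $D$'': taken literally, your formulas prove the reciprocal of the stated condition. Write $\tilde{\nu}(f):=\int_{X}\sum_{Ty=x}f(y)\,d\mu(x)$ for the lift of $\mu$. Your first display encodes the paper's stated convention $\nu^{-1}=\Delta\nu$, and the paper's product formula gives $\Delta(x,1,Tx)=D(x)$; summing your one-step identity branchwise then yields $\tilde{\nu}(f)=\int_{X}Df\,d\mu$, i.e.\ $d\tilde{\nu}/d\mu=D$. But $\mathcal{L}_{D}^{*}(\mu)=\mu$ says $\tilde{\nu}(Df)=\mu(f)$, i.e.\ $d\tilde{\nu}/d\mu=1/D$ (the substitution $f\mapsto f/D$ is legitimate because $\mathcal{L}_{D}^{*}(\mu)=\mu$ forces $D>0$ $\mu$-a.e.). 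These agree only when $D^{2}=1$ a.e. A sanity check exposes it: for $X=\mathbb{T}$, $Tz=z^{2}$, $\mu$ Lebesgue, one has $\tilde{\nu}=2\mu$ and $\mathcal{L}_{D}^{*}(\mu)=\mu$ holds for $D\equiv\frac{1}{2}$, while your summed identity would demand $D\equiv 2$. The repair is to run the whole argument with Renault's convention $\Delta=d\nu/d\nu^{-1}$, so the global identity is
\begin{equation*}
\int_{X}\sum_{r(\gamma)=x}f(\gamma)\,d\mu(x)=\int_{X}\sum_{s(\gamma)=x}\Delta(\gamma)f(\gamma)\,d\mu(x),
\end{equation*}
whose one-step restriction is $\int_{U}\phi\,d\mu=\int_{TU}D\bigl((T|_{U})^{-1}x\bigr)\,\phi\bigl((T|_{U})^{-1}x\bigr)\,d\mu(x)$; summing this branchwise gives exactly $\mu(\phi)=\mathcal{L}_{D}^{*}(\mu)(\phi)$, and the rest of your argument goes through unchanged. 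In fairness, you could not have known that the paper's parenthetical definition ``$\Delta:=\frac{d\nu^{-1}}{d\nu}$'' is itself inconsistent with its displayed product formula for $\Delta$; but hedging with ``in the convention fixing the direction of the derivative'' defers the one computation on which the statement actually turns, and a test against the Lebesgue example would have pinned the convention down. The remaining points you call routine (extending the $C_{c}(G)$ identity to the measurable weight $D$, a.e.\ bookkeeping in the chain-rule closure under composition and inversion) are indeed routine.
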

In applications to wavelets, i.e. to the settings where $X=\mathbb{T}$
or $X=\mathbb{T}^{n}$ and $T$ is the power function $z\to z^{N}$
or $x+\mathbb{Z}^{n}\to Ax+\mathbb{Z}^{n}$ the measure that one usually
chooses is Lebesgue measure. Also, the bundle one chooses is the trivial
line bundle $\mathcal{H}=\mathbb{T}\times\mathbb{C}$ or
$\mathcal{H}=\mathbb{T}^{n}\times\mathbb{C}$
and the representation is the translation representation: $U(\gamma):\{
s(\gamma)\}\times\mathbb{C}\to\{ r(\gamma)\}\times\mathbb{C}$,\[
U(\gamma)(s(\gamma),c)=(r(\gamma),c),\]
$\gamma\in G$. But we note that some of the recent work of Dutkay
and Roysland \cite{DR06,DR07} can be formulated in the setting we
are describing by taking more complicated bundles and representations.

\section{An Example: Classical Wavelets}

We discuss how the constructs we have described can enter into analysis
of classical wavelets. In this setting, as we have indicated, $X$
is the circle or $1$-torus $\mathbb{T}$, the local homeomorphism
$T$ is given by squaring: $Tz=z^{2}$, the quasi-invariant measure
$\mu$ is Lebesgue measure on $\mathbb{T}$, the bundle $\mathcal{H}$
is the trivial one-dimensional bundle, and the representation $U$
is translation. The $L^{2}$-sections of $\mathcal{H}$ is just $L^{2}(\mu)$
and if $\pi$ is the integrated form of the representation associated
to this data, then $\pi$ represents $C(X)$ (viewed as $\iota(C(X))$
in $C^{*}(G)$) as multiplication operators on $L^{2}(\mu)$. Further,
if ${\{ m}_{i}\}_{i=1,2}$ is a filter bank and if $S_{1}$ and $S_{2}$
are the isometries it determines as in Theorem \ref{StaceyXpd2},
then $\pi(S_{i})\xi(z)=m_{i}(z)\xi(z^{2})$, $i=1,2$. Thus
$(\pi(S_{1}),\pi(S_{2}))$
is a Cuntz family on $L^{2}(\mu)$ such that\begin{eqnarray*}
\pi(\iota(\alpha(f))) & = & \pi(S_{1})\pi(\iota(f))\pi(S_{1})^{*}\\
 & + & \pi(S_{2})\pi(\iota(f))\pi(S_{2})^{*},\end{eqnarray*}
for all $f\in C(\mathbb{T})$, which is equation (\ref{eq:inner}).
We note in passing that there are many filter banks and that given
any $m_{1}$ in $\mathcal{X}$ such that $\langle m_{1},m_{1}\rangle=1$,
we obtain a filter bank $\{ m_{1},m_{2}\}$ if we take $m_{2}$ to
be the function defined by the equation \[
m_{2}(z):=z\overline{m_{1}(-z)}.\]
All other possibilities for $m_{2}$ are obtained from this choice
by multiplying it by $\theta(z^{2})$, where $\theta$ is a continuous
function of modulus $1$.

The key to building wavelets from the Cuntz relations is to build
the minimal unitary extension of $\pi(S_{1})$. This was observed
by Bratteli and Jorgensen in \cite{BJ97}. However, we follow Larsen
and Raeburn \cite{LR06} who use the inductive limit approach advanced
by Douglas \cite{rD69}. Here is the basic setup: Form the inductive
system\[
H_{n}\stackrel{S_{m,n}}{\longrightarrow}H_{m}\]
 where $H_{n}=L^{2}(\mathbb{T})$ for every $n$ and the ``linking
maps'' $S_{m,n}:H_{n}\to H_{m}$, are simply the powers of $\pi(S_{1})$:
$S_{m,n}=\pi(S_{1})^{m-n}$. We let $H_{\infty}$ denote the inductive
limit $\underrightarrow{\lim}(\{ H_{n}\},\{ S_{m,n}\}\}$, and we
let $S_{\infty,n}:H_{n}\to H_{\infty}$ denote the limit embeddings.
Then there is a unique unitary $U$ on $H_{\infty}$ so that\[
US_{\infty,n+1}=S_{\infty,n+1}\pi(S_{1})=S_{\infty,n},\]
for all $n$. This map $U$ is the minimal isometric extension of
$\pi(S_{1})$. We want to uncover a bit more structure in $U$. 

To this end, observe that for $m,n\ge0$,\begin{eqnarray*}
{(S}_{m,n}\xi)(z) & = & 2^{(m-n)/2}m_{1}(z)m_{1}(z^{2})\\
 & \cdots & m_{1}(z^{2^{(m-n)-1}})\xi(z^{2^{(m-n)}}).\end{eqnarray*}
Analysis of Dutkay and Jorgensen in \cite{DJ05} and \cite[Proposition 2.2]{DJ06}
leads to an explicit identification of $H_{\infty}$ with
$L^{2}(\mathbb{T}_{\infty},\tilde{\mu})$,
where $\mathbb{T}_{\infty}$ is the 2-adic solenoid and $\tilde{\mu}$
is a measure built from Lebesgue measure on $\mathbb{T}$ and the
transfer operator associated with $\vert m_{1}\vert^{2}$:\[
\mathcal{L}_{m_{1}}(f)(z)=\frac{1}{\vert T^{-1}(x)\vert}\sum_{w^{2}=z}\vert
m_{1}(w)\vert^{2}f(w).\]
 The representation $\pi$ of $C(\mathbb{T})$ on $L^{2}(\mu)$ extends
to a representation $\rho$ of $C(\mathbb{T})$ on
$L^{2}(\mathbb{T}_{\infty},\tilde{\mu})$
via the formula \[
\rho(f)\xi(z_{1},z_{2},\ldots)=f(z_{1})\xi(z_{1},z_{2},\ldots),\]
where $\xi\in L^{2}(\mathbb{T}_{\infty},\tilde{\mu})$, and where,
recall, points in $\mathbb{T}_{\infty}$ are sequences $(z_{1},z_{2},\ldots)$
such that $z_{k}=z_{k+1}^{2}$, for all $k\geq1$. Also, the measure
$\tilde{\mu}$ is quasi-invariant for the extension $T_{\infty}$
of the map $T$ on $\mathbb{T}$, defined on $\mathbb{T}_{\infty}$
by the formula $T_{\infty}(z_{1},z_{2},\ldots)=(z_{1}^{2},z_{1},z_{2},\ldots)$,
and $U$ is given by the formula
$$U\xi(z_{1},z_{2},\ldots)=\xi(z_{1}^{2},z_{1},z_{2},\ldots)J^{\frac{1}{2}}(z_{1}
,z_{2},\ldots),$$
where $J$ is the Radon-Nikodym derivative with respect to $\tilde{\mu}$
of the translate of $\tilde{\mu}$ by $T_{\infty}$ . The pair $(\rho,U)$
is a covariant pair: \[
\rho(f\circ T_{\infty})=U\rho(f)U^{-1},\]
for all $f\in C(\mathbb{T})$. To build the wavelet associated with
the filter bank, we need to get from $L^{2}(\mathbb{T}_{\infty},\tilde{\mu})$
to $L^{2}(\mathbb{R})$ in a unitary fashion that transforms $U$
into $D$, which recall is given by the formula $D\xi(x)=\sqrt{2}\xi(2x),$
and transforms $\rho$ into the representation $\tilde{\rho}(f)\xi(x)=f(e^{2\pi
ix})\xi(x)$.
This is accomplished with the aid of a famous theorem of Mallat.

\begin{thm}
\label{thm:Mallat}(Mallat \cite[Theorem
2]{sM89}) Suppose $m_{1}$, which is a unit vector
in the $C^{*}$-correspondence $\mathcal{X}$, satisfies the additional
two hypotheses:
\begin{enumerate}
\item The Fourier coefficients of $m_{1}$are $O((1+k^{2})^{-1}).$
\item $\vert m_{1}(1)\vert=1$
\item For all $x\in[-\frac{\pi}{2},\frac{\pi}{2}]$, $m_{1}(e^{ix})\neq0$.
\end{enumerate}
Then the product $\prod_{k=1}^{\infty}m_{1}(e^{2\pi i2^{-k}t})$ converges
on $\mathbb{R}$ and the limit, $\phi$, lies in $L^{2}(\mathbb{R})$.
Further, for all $x\in\mathbb{R}$

\begin{enumerate}
\item $\phi(2x)=m_{1}(e^{2\pi ix})\phi(x)$ 
\item $\sum_{k\in\mathbb{Z}}\vert\phi(x+k)\vert^{2}=1$. 
\end{enumerate}
\end{thm}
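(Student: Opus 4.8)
The plan is to run the classical scaling-function construction, reading hypotheses (1)--(3) as the three analytic inputs it requires. Throughout I regard $m_1$ as a function on $\mathbb{T}$ and write $m_1(e^{2\pi i s})$ for the associated $1$-periodic function of $s\in\mathbb{R}$. The decay in (1) forces $m_1$ to be Lipschitz near the identity; (2) normalizes its value at the identity so that the factors of the infinite product approach $1$; and (3), the non-vanishing of $m_1$ on an arc about the identity, is a Cohen-type condition that I expect to use only at the very end. The defining relation $\langle m_1,m_1\rangle=1$ of a unit vector supplies the quadrature-mirror identity relating $|m_1|^2$ at the two square roots of a point, and this is what drives every cancellation below.

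First I would establish convergence of $\phi(t):=\prod_{k=1}^{\infty}m_1(e^{2\pi i2^{-k}t})$, the scaling relation, and $L^2$-membership. For fixed $t$ the arguments $e^{2\pi i2^{-k}t}$ tend to the identity, so (1) and (2) give $|m_1(e^{2\pi i2^{-k}t})-1|\le C\,2^{-k}|t|$, which is summable in $k$; a logarithm/telescoping estimate then yields absolute, locally uniform convergence. Conclusion (a) is immediate by reindexing: $\phi(2t)=\prod_{k\ge1}m_1(e^{2\pi i2^{-(k-1)}t})=m_1(e^{2\pi it})\,\phi(t)$. For $L^2$-membership I would truncate, setting $\phi_n(t)=\bigl(\prod_{k=1}^{n}m_1(e^{2\pi i2^{-k}t})\bigr)\mathbf{1}_{[-2^{n-1},\,2^{n-1}]}(t)$; since $\prod_{k=1}^{n-1}|m_1(e^{2\pi i2^{-k}t})|^2$ is $2^{n-1}$-periodic, pairing $t$ with $t+2^{n-1}$ and applying the quadrature-mirror identity to the outermost factor collapses one level of the integral, and iterating shows $\int_{\mathbb{R}}|\phi_n|^2$ is independent of $n$ (and equal to $1$ after the normalization). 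As $\phi_n\to\phi$ pointwise, Fatou's lemma gives $\phi\in L^2(\mathbb{R})$ with $\|\phi\|_2^2\le1$.

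The substance is conclusion (b). Set $\Phi(x)=\sum_{k\in\mathbb{Z}}|\phi(x+k)|^2$, a non-negative, continuous, $1$-periodic function (finiteness and continuity coming from the decay that (1) forces on $\phi$). Using (a) and splitting the sum over $k$ into even and odd residues yields the fixed-point relation
\[
\Phi(x)=|m_1(e^{2\pi ix/2})|^2\,\Phi(x/2)+|m_1(e^{2\pi i(x/2+1/2)})|^2\,\Phi(x/2+1/2),
\]
which exhibits $\Phi$ as a continuous fixed point of the Ruelle transfer operator built from $|m_1|^2$. The constant function is the obvious candidate, and here is where (3) enters: a Cohen-type argument shows that the backward orbit of any point under $x\mapsto x/2$ is eventually trapped in the arc on which $m_1$ does not vanish, so that iterating the displayed relation forces $\Phi$ to equal its value at the identity everywhere, i.e.\ $\Phi$ is constant. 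To pin the constant, combine $\Phi\equiv\Phi(0)=\int_0^1\Phi=\|\phi\|_2^2\le1$ with $\Phi(0)\ge|\phi(0)|^2=|\prod m_1(1)|^2=1$ (using (2)); thus $\Phi\equiv1$, which is (b).

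I expect the uniqueness of the continuous fixed point in the last step to be the main obstacle. The transfer operator generically admits further fixed points concentrated on the zero set of $m_1$, and excluding them is precisely the analytic content of Mallat's (Cohen's) non-vanishing hypothesis (3); by contrast, everything earlier is bookkeeping with the quadrature-mirror identity together with the elementary decay and normalization estimates supplied by (1) and (2).
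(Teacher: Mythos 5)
The paper itself contains no proof of this theorem: it is quoted from Mallat's paper (the remark that follows it in the text merely matches the hypotheses with Mallat's equations (38)--(41)), so your proposal must be measured against the classical argument. Your first half --- locally uniform convergence of the product, conclusion (a) by reindexing, and $\Vert\phi\Vert_{2}\leq1$ via truncation, quadrature-mirror pairing of $t$ with $t+2^{n-1}$, and Fatou --- is exactly the standard route and is essentially sound, with two small slips worth repairing. Hypothesis (1) gives $\sum_{k}\vert k\vert\,\vert\hat{m}_{1}(k)\vert=\infty$, so $m_{1}$ need not be Lipschitz; you only get a modulus of continuity of order $\vert s\vert\log(1/\vert s\vert)$, which is still summable along $s=2^{-k}t$, so convergence survives. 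And you tacitly use $m_{1}(1)=1$ rather than $\vert m_{1}(1)\vert=1$ (otherwise the product diverges already at $t=0$ and your bound $\vert\phi(0)\vert=1$ fails); this is harmless after rotating $m_{1}$ by $\overline{m_{1}(1)}$, but say so. You were also right to hedge with ``after the normalization'': with the paper's $\mathcal{X}$-inner product, a unit vector satisfies $\vert m_{1}(w)\vert^{2}+\vert m_{1}(-w)\vert^{2}=2$, under which your pairing gives $\Vert\phi_{n}\Vert^{2}=2\Vert\phi_{n-1}\Vert^{2}$ and the weights in your fixed-point relation sum to $2$, so constants are not fixed points; the conclusions as stated require Mallat's normalization (fiber sum equal to $1$), and this discrepancy in the statement should be flagged explicitly rather than absorbed.

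The genuine gap is in your proof of (b): you assert that $\Phi(x)=\sum_{k}\vert\phi(x+k)\vert^{2}$ is finite and continuous ``from the decay that (1) forces on $\phi$,'' but (1) controls the filter $m_{1}$, not the decay of the infinite product $\phi$ at infinity, and no such decay follows from (1)--(3) in general. Without it, $\Phi$ is only a lower semicontinuous element of $L^{1}(\mathbb{T})$, and your finishing move --- trap both extrema of $\Phi$ along the backward orbit $x\mapsto x/2$ and conclude constancy ``by continuity'' --- cannot be run: the supremum need not be attained, and the passage $\Phi(2^{-n}x)\to\Phi(0)$ needs continuity at the identity. This is precisely the difficulty Mallat's proof (equivalently Cohen's argument as presented by Daubechies) is engineered to avoid: hypothesis (3) is used not to exclude exotic fixed points of the transfer operator but to produce a uniform lower bound --- since $2^{-k}t\in[-\frac{1}{4},\frac{1}{4}]$ for $\vert t\vert\leq\frac{1}{2}$ and all $k\geq1$, (3) makes $\phi$ continuous and zero-free on $[-\frac{1}{2},\frac{1}{2}]$, so $\vert\phi\vert\geq c>0$ there; then $\phi=\phi_{n}\cdot\phi(2^{-n}\cdot)$ on $[-2^{n-1},2^{n-1}]$ gives the domination $\vert\phi_{n}\vert\leq c^{-1}\vert\phi\vert\in L^{2}$, and since your own pairing computation yields the \emph{exact} identity $\sum_{k}\vert\phi_{n}(x+k)\vert^{2}=1$ for every truncation, dominated convergence delivers (b) almost everywhere without ever knowing $\Phi$ is continuous. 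Your transfer-operator route is correct in spirit (it is Lawton's circle of ideas, and it is a complete proof when $m_{1}$ is a trigonometric polynomial, where $\Phi$ is automatically one too); alternatively, your argument can be patched using only lower semicontinuity --- propagate the attained \emph{minimum} to get $\Phi(0)\leq\min\Phi$, combine with $\Phi(0)\geq\vert\phi(0)\vert^{2}=1$ and $\int_{0}^{1}\Phi\leq1$ to force $\Phi=1$ a.e. --- but that is a different finishing step from the one you wrote, and it proves (b) only almost everywhere. As written, conclusion (b) is unproved.
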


\begin{rem}
Mallat's hypotheses are labeled as equations (38)--(41) on page 76
of \cite{sM89}. Equation (38) is our hypothesis 1, equation (39)
is our hypothesis 2., and his equation (41) is our hypothesis 3. Equation
(40) is the assertion that $m_{1}$ is a unit vector in $\mathcal{X}$.
We note, however, that there is a lot of {}``wiggle room'' in these
hypotheses and a lot of work has been devoted to finding their exact
limits. In \cite[Chapter 6]{iD92}, for example, Daubechies discusses
aspects of this matter at length and exposes, in particular, works
of Cohen and Lawton which give necessary and sufficient conditions
for a unit vector $m_{1}$ to be a \emph{trigonometric polynomial}
and generate a wavelet. The point to keep in mind, for our purposes,
is that a unit vector $m_{1}\in\mathcal{X}$ always generates an isometry $S_1$.
Further, the minimal unitary extension of $\pi (S_1)$, $U$, lives on the space
$L^{2}(\mathbb{T}_{\infty},\tilde{\mu})$, where $\tilde{\mu}$ is constructed
using $m_1$. These things do not depend on anything other than the fact that
$m_1$ is a unit vector in $\mathcal{X}$.
However, \emph{some} hypotheses on $m_{1}$ seem to be necessary to get
from $L^{2}(\mathbb{T}_{\infty},\tilde{\mu})$ to $L^{2}(\mathbb{R})$.
Conclusion 1. of Mallat's theorem is the stepping stone that takes us
from $L^{2}(\mathbb{T}_{\infty},\tilde{\mu})$ to $L^{2}(\mathbb{R})$.
Conclusion 2. does not play an immediate role in the Larsen-Raeburn
approach, but it implies, in particular, that translates of $\hat{\phi}$
are orthonormal \cite[Equation (50)]{sM89}. 
\end{rem}

With the aid of $\phi$ we may define $R_{n}:H_{n}\rightarrow L^{2}(\mathbb{R})$
via the formula $(R_{n}\xi)(x):=2^{\frac{-n}{2}}\xi(e^{2\pi
i(2^{-n}x)})\phi(2^{-n}x)$,
$\xi\in H_{n}(=L^{2}(\mathbb{T}))$. It then is a simple matter to
check that $R_{n}$ is an isometry that satisfies the equation
$R_{n+1}\pi(S_{1})=R_{n}$.
By properties of inductive limits, we may conclude that there is a
unique Hilbert space isometric injection $R_{\infty}:H_{\infty}\rightarrow
L^{2}(\mathbb{R})$
so that $R_{\infty}S_{\infty,n}=R_{n}$. The problem now is to show
that $R_{\infty}$ is surjective. For this purpose, define
$\mathcal{V}_{n}:=R_{n}H_{n}=R_{n}L^{2}(\mathbb{T}).$
Then, as Mallat showed in the proof of the second half of Theorem
2 in \cite{sM89}, we have:

\begin{lem}
\label{Mallat}(Mallat) 
\begin{enumerate}
\item $\mathcal{V}_{n}\subseteq\mathcal{V}_{n+1}$. 
\item $\cap{\mathcal{V}_{n}}=\{0\}$. 
\item $\bigvee\mathcal{V}_{n}=L^{2}(\mathbb{R})$. 
\end{enumerate}
Thus $R_{\infty}:H_{\infty}\rightarrow L^{2}(\mathbb{R})$ is a Hilbert
space isomorphism. 
\end{lem}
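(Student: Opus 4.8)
The plan is to recognize the three assertions as the axioms of a multiresolution analysis and to verify them through the orthogonal projections $P_n$ onto the subspaces $\mathcal{V}_n$. First I would record the two structural facts already in hand: the intertwining relation $R_{n+1}\pi(S_1)=R_n$ and the fact that each $R_n$ is an isometry. The latter comes from conclusion (2) of Theorem~\ref{thm:Mallat}: after the substitution $u=2^{-n}x$ and periodization in $u$, one gets $\|R_n\xi\|^2=\int_0^1|\xi(e^{2\pi iu})|^2\sum_{k}|\phi(u+k)|^2\,du=\|\xi\|^2$. Writing $\chi_k(z)=z^k$ for the characters, it follows that $\{R_n\chi_k\}_{k\in\mathbb{Z}}$ is an orthonormal basis of $\mathcal{V}_n$, so that $P_nf=\sum_k\langle R_n\chi_k,f\rangle\,R_n\chi_k$. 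Assertion (1) is then immediate: since $\pi(S_1)$ carries $H_n$ into $H_{n+1}$, the relation $R_{n+1}\pi(S_1)=R_n$ gives $\mathcal{V}_n=R_nH_n=R_{n+1}(\pi(S_1)H_n)\subseteq R_{n+1}H_{n+1}=\mathcal{V}_{n+1}$. Consequently $\{P_n\}$ is an increasing sequence of projections converging strongly to the projection onto $\overline{\bigcup_n\mathcal{V}_n}=\bigvee_n\mathcal{V}_n$.

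The heart of the matter is assertion (3), and I would reduce it to showing $\|P_nf\|\to\|f\|$ for $f$ ranging over the dense set of functions with continuous, compactly supported Fourier transform. Expanding $\|P_nf\|^2=\sum_k|\langle R_n\chi_k,f\rangle|^2$ and passing to the frequency side via $u=2^{-n}x$, the sum reorganizes (by Parseval on $\mathbb{T}$) into an integral of $|\hat f|^2$ against a fine-scale periodization of $|\phi|^2$ that concentrates near the origin as $n\to\infty$. The low-pass normalization $|m_1(1)|=1$ of hypothesis (2), together with $\phi(0)\neq0$ and the continuity of $\phi$ near $0$ furnished by the Fourier-coefficient decay of hypothesis (1), force the integrand to converge pointwise to $|\hat f|^2$; dominated convergence then yields $\|P_nf\|^2\to\int|\hat f|^2=\|f\|^2$. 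This establishes $\bigvee_n\mathcal{V}_n=L^2(\mathbb{R})$.

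For assertion (2) --- understood with $\mathcal{V}_n$ defined by the same formula for all $n\in\mathbb{Z}$ --- I would run the complementary coarse-scale estimate: if $f\in\bigcap_n\mathcal{V}_n$ then $\|f\|=\|P_nf\|$ for every $n$, and a Cauchy--Schwarz bound on the same frequency-side expression shows $\|P_nf\|\to0$ as $n\to-\infty$, because the mass of $|\hat f|^2$ is dispersed over ever-wider periods while $\phi\in L^2$; hence $f=0$. I would remark that (2) belongs to the standard package but is not actually needed for the stated conclusion, which follows from (3) alone. Indeed $R_\infty$ is by construction an isometry with $R_\infty S_{\infty,n}=R_n$, so its range is $\overline{\bigcup_n R_nH_n}=\overline{\bigcup_n\mathcal{V}_n}$; being the range of an isometry it is closed, and by (3) it is dense, hence equal to $L^2(\mathbb{R})$. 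An isometry onto the whole space is a Hilbert-space isomorphism.

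The main obstacle is the limit interchange in (3): one must control the change of variables near the frequency origin, where the periodization of $|\phi|^2$ degenerates, and show that the low-pass value $|\phi(0)|=1$ exactly compensates. This is precisely where the decay and nonvanishing hypotheses on $m_1$ are indispensable. A secondary but genuine nuisance is bookkeeping of the $\sqrt{2}$ normalization factors, since the groupoid inner product divides by $|T^{-1}|=2$ whereas the classical dilation $D\xi(x)=\sqrt{2}\,\xi(2x)$ does not; these must be reconciled so that $R_n$ is genuinely isometric and the relation $R_{n+1}\pi(S_1)=R_n$ holds on the nose.
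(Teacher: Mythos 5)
Your argument is correct and coincides with the proof the paper relies on: the paper does not prove this lemma itself but cites Mallat's proof of the second half of Theorem 2 in \cite{sM89}, which is precisely the frequency-side periodization argument you reconstruct (and the paper's subsequent remark records the same alternatives you gesture at --- purity of $\pi(S_1)$ \`a la Bratteli--Jorgensen \cite{BJ97} for assertion 2, and Strichartz \cite[Lemma 3.1]{rS94} for assertion 3). Your two side observations are also sound: assertion 2 is indeed not needed for the surjectivity of $R_{\infty}$, which follows from assertion 3 alone since the range of the isometry $R_{\infty}$ is closed and contains every $\mathcal{V}_{n}$, and the $\sqrt{2}$ normalization mismatch between the unit-vector convention for $m_{1}$ in $\mathcal{X}$ and the hypothesis $\vert m_{1}(1)\vert=1$ of Theorem \ref{thm:Mallat} genuinely requires the reconciliation you flag.
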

We want to remark that conditions 2. and 3. of the preceding lemma
represent two different problems. Condition 2. is the assertion that
the isometry $\pi(S_{1})$ is a pure isometry. Bratteli and Jorgensen
provide a proof of this that is different from Mallat's by noting
that $\pi(S_{1})$ is pure because $m_{1}$ does not have modulus
one a.e. \cite[Theorem 3.1]{BJ97}. The condition 3. also has alternate
proofs. One that we find particularly attractive, because it works
in the more general setting of wavelets built on $\mathbb{T}^{n}$
using a dilation matrix $A$, may be found in Strichartz's survey
\cite[Lemma 3.1]{rS94}.

Recall that the ``dilation by $2$ operator'', $D$, is defined on
$L^{2}(\mathbb{R})$ by the formula $D\xi(x)=2^{1/2}\xi(2x)$ and
that $D$ a unitary operator on $L^{2}(\mathbb{R})$.

\begin{lem}
$R_{\infty}UR_{\infty}^{-1}=D,$ and
$R_{\infty}\rho(f)R_{\infty}^{-1}\xi(t)=f(e^{2\pi it})\xi(t)$
for all $f\in C(\mathbb{T}),$ all $\xi$ in $L^{2}(\mathbb{R})$
and all $t\in\mathbb{R}$. 
\end{lem}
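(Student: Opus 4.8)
The plan is to verify both operator identities on the dense subspace $\bigcup_n S_{\infty,n}H_n$ of $H_\infty$, where everything reduces to explicit changes of variable in the defining formula $(R_n\xi)(x)=2^{-n/2}\xi(e^{2\pi i2^{-n}x})\phi(2^{-n}x)$. Since $R_\infty$, $U$, $\rho(f)$ and $D$ are all bounded and $S_{\infty,n}H_n\subseteq S_{\infty,n+1}H_{n+1}$, it suffices to check each identity on vectors $S_{\infty,n}\xi$ with $\xi\in H_n=L^2(\mathbb{T})$ and $n$ as large as convenient. Throughout I would lean on the two structural facts $R_\infty S_{\infty,k}=R_k$ and $US_{\infty,n}=S_{\infty,n-1}$, the latter being the defining property of the minimal unitary extension.

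For the first identity I would evaluate both sides on $S_{\infty,n}\xi$: on the one hand $R_\infty U S_{\infty,n}\xi=R_\infty S_{\infty,n-1}\xi=R_{n-1}\xi$, and on the other $DR_\infty S_{\infty,n}\xi=DR_n\xi$. Thus $R_\infty UR_\infty^{-1}=D$ reduces to the single relation $DR_n=R_{n-1}$, which follows by substituting $x\mapsto2x$ in the formula for $R_n$: the factor $\sqrt2$ from $D$ combines with $2^{-n/2}$ to give $2^{-(n-1)/2}$, while $2^{-n}\cdot2x=2^{-(n-1)}x$ converts the arguments of both $\xi$ and $\phi$ into those of $R_{n-1}$. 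This is a one-line computation once the reduction is in place.

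For the second identity the key intermediate step is the intertwining relation
\[
\rho(f)\,S_{\infty,n}=S_{\infty,n}\,\pi(\iota(f\circ T^{n})),\qquad f\in C(\mathbb{T}).
\]
I would establish this directly by checking that the representations $f\mapsto\pi(\iota(f\circ T^{n}))$ of $C(\mathbb{T})$ on the copies $H_n$ are compatible with the connecting maps $\pi(S_1)$: using $\pi(S_1)\xi(z)=m_1(z)\xi(z^2)$ one computes $\pi(S_1)\pi(\iota(f\circ T^n))=\pi(\iota(f\circ T^{n+1}))\pi(S_1)$, so the family descends to $\rho$ on the inductive limit. (Equivalently, one may derive the relation from the covariance $U\rho(f)U^{-1}=\rho(f\circ T)$ together with $\rho(f)S_{\infty,0}=S_{\infty,0}\pi(\iota(f))$ and $S_{\infty,n}=U^{-n}S_{\infty,0}$.) Granting it, $R_\infty\rho(f)S_{\infty,n}\xi=R_n\big(\pi(\iota(f\circ T^n))\xi\big)$; since $\pi(\iota(g))$ is multiplication by $g$ and $\big(e^{2\pi i2^{-n}x}\big)^{2^n}=e^{2\pi ix}$, the factor $f(z^{2^n})$ evaluates to $f(e^{2\pi ix})$ and pulls out of $R_n$. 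Hence $R_\infty\rho(f)S_{\infty,n}\xi=f(e^{2\pi ix})\,R_\infty S_{\infty,n}\xi$, giving the stated multiplication formula on the dense set, and therefore everywhere.

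The routine content is the two changes of variable, where the only real bookkeeping is matching the dilation scale $2^{-n}$ against the $2^n$-th power in $T^n$. The one step that genuinely requires care, and which I regard as the main obstacle, is pinning down the intertwining $\rho(f)S_{\infty,n}=S_{\infty,n}\pi(\iota(f\circ T^n))$, i.e.\ reconciling the Dutkay--Jorgensen solenoid model of $\rho$ with its description on the inductive tower; the compatible-family computation above is the cleanest way to secure it. Everything else (density, boundedness, and the surjectivity of $R_\infty$) is already supplied by the preceding Mallat lemma and the inductive-limit formalism.
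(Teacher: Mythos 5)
Your proposal is correct and takes essentially the same route as the paper: the paper proves the first identity by exactly your computation (evaluating $DR_{\infty}S_{\infty,n+1}\xi=DR_{n+1}\xi$ and performing the change of variable to get $R_{n}\xi=R_{\infty}US_{\infty,n+1}\xi$), and disposes of the second with the single sentence ``The second assertion is verified similarly.'' Your intertwining relation $\rho(f)S_{\infty,n}=S_{\infty,n}\pi(\iota(f\circ T^{n}))$ is precisely the detail that ``similarly'' suppresses, and both of your derivations of it (the compatible-family check via $\pi(S_{1})\pi(\iota(f\circ T^{n}))=\pi(\iota(f\circ T^{n+1}))\pi(S_{1})$, and the one via covariance and $S_{\infty,n}=U^{-n}S_{\infty,0}$) are sound.
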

\begin{proof}
The first assertion requires only a simple calculation:\begin{multline*}
D(R_{\infty}(S_{\infty,n+1}\xi))(x)=D(R_{n+1}\xi)(x)=2^{1/2}(R_{n+1}\xi)(2x)\\
=2^{1/2}2^{\frac{-(n+1)}{2}}\xi(e^{2\pi
i(2^{-(n+1)}2x)})\phi(2^{-(n+1)}2x)=(R_{n}\xi)(x)\\
=(R_{\infty}S_{\infty,n}\xi)(x)=(R_{\infty}US_{\infty,n+1}\xi)(x).
\end{multline*}
 The second assertion is verified similarly.
\end{proof}
\noindent If we set $\mathcal{W}_{n}=\mathcal{V}_{n+1}\ominus\mathcal{V}_{n}$,
then by Lemma \ref{Mallat}:\[
\bigoplus_{n\in\mathbb{Z}}\mathcal{W}_{n}=L^{2}(\mathbb{R}).\]
 But then we find that

\noindent \begin{eqnarray}
\mathcal{W}_{0} & = &
\mathcal{V}_{1}\ominus\mathcal{V}_{0}=R_{1}L^{2}(\mathbb{T})\ominus
R_{0}L^{2}(\mathbb{T})\label{eq:Wzero}\\
 & = & R_{1}L^{2}(\mathbb{T})\ominus R_{1}\pi(S_{1})L^{2}(\mathbb{T})\nonumber
\\
 & = & R_{1}(L^{2}(\mathbb{T})\ominus\pi(S_{1})L^{2}(\mathbb{T}))\nonumber \\
 & = & R_{1}\pi(S_{2})L^{2}(\mathbb{T}),\nonumber \end{eqnarray}
by the Cuntz relations. So if we set $e_{k}(z)=z^{k}$ and set
$\zeta_{k}:=R_{1}\pi(S_{2})e_{-k}$.
Then $\{\zeta_{k}\}$ is an orthonormal basis for $\mathcal{W}_{0}$
and \begin{eqnarray*}
\zeta_{k}(x)= & 2^{-1/2}(2^{1/2}m_{2}(e^{(2\pi i2^{-1}x)})e^{-2\pi
ikx})\phi(2^{-1}x)\\
= & e^{-2\pi ikx}\zeta(x)\end{eqnarray*}
where \[
\zeta(x):=m_{2}(e^{\pi ix})\phi(2^{-1}x).\]

\noindent Thus, $\{\zeta_{j,k}\}_{j,k=-\infty}^{\infty}$ is an orthonormal
basis for $L^{2}(\mathbb{R})$, where

\begin{eqnarray*}
\zeta_{j,k}(x) & = & D^{j}\zeta_{k}(x)\\
 & = & 2^{j/2}e^{(-2\pi ik2^{j}x)}\zeta(2^{j}x).\end{eqnarray*}

\noindent Consequently, if $\psi$ is the inverse Fourier transform
of $\zeta$, then $\psi$ is a wavelet.

This completes the proof of the following theorem as formulated by
Bratteli and Jorgensen \cite{BJ97} and Larsen and Raeburn \cite{LR06}. 

\begin{thm}
\label{JLR}(Bratteli-Jorgensen, Larsen-Raeburn)

The inverse Fourier transform of\[
m_{2}(e^{\pi ix})\phi(2^{-1}x)\]
 is the wavelet associated with the filter bank $(m_{1},m_{2})$. 
\end{thm}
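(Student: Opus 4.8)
The plan is to assemble the pieces already established in the preceding development, the crucial one being Lemma \ref{Mallat}: by that lemma $R_\infty$ is a Hilbert space isomorphism of $H_\infty$ onto $L^2(\mathbb{R})$, under which the minimal unitary extension $U$ becomes the dilation $D$ and $\rho(f)$ becomes multiplication by $f(e^{2\pi it})$. Consequently the subspaces $\mathcal{V}_n = R_nL^2(\mathbb{T})$ form a multiresolution analysis satisfying conditions 1.--3. of Lemma \ref{Mallat}, so that with $\mathcal{W}_n = \mathcal{V}_{n+1}\ominus\mathcal{V}_n$ one has the orthogonal decomposition $\bigoplus_{n\in\mathbb{Z}}\mathcal{W}_n = L^2(\mathbb{R})$. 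The entire argument is then a matter of producing an orthonormal basis of $L^2(\mathbb{R})$ of the correct form and transferring it to the time domain.

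First I would pin down an orthonormal basis of the single detail space $\mathcal{W}_0$. The computation (\ref{eq:Wzero}) shows, via the Cuntz relation $\pi(S_1)\pi(S_1)^* + \pi(S_2)\pi(S_2)^* = 1$, that $\mathcal{W}_0 = R_1\pi(S_2)L^2(\mathbb{T})$. Since $\pi(S_2)$ is an isometry and $R_1$ is isometric, the operator $R_1\pi(S_2)$ carries the orthonormal basis $\{e_{-k}\}_{k\in\mathbb{Z}}$ of $L^2(\mathbb{T})$ onto an orthonormal basis of $\mathcal{W}_0$; these are exactly the vectors $\zeta_k := R_1\pi(S_2)e_{-k}$. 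Substituting the explicit formulas for $R_1$, $\pi(S_2)$, and $\phi$ gives $\zeta_k(x) = e^{-2\pi ikx}\zeta(x)$ with $\zeta(x) = m_2(e^{\pi ix})\phi(2^{-1}x)$, so that each $\zeta_k$ is a modulate of the single function $\zeta$.

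Next I would propagate this basis across scales. Because $D$ is unitary and the intertwining $R_\infty U R_\infty^{-1} = D$ forces $D$ to carry each $\mathcal{W}_n$ onto an adjacent detail space, the family $\{\zeta_{j,k}\}_{j,k\in\mathbb{Z}}$ defined by $\zeta_{j,k} := D^j\zeta_k$ is an orthonormal basis for $\bigoplus_n\mathcal{W}_n = L^2(\mathbb{R})$; explicitly $\zeta_{j,k}(x) = 2^{j/2}e^{-2\pi ik2^jx}\zeta(2^jx)$. This exhibits the basis as the dilates-and-modulates of the single frequency-side function $\zeta$. Finally I would pass to the time domain by writing $\psi$ for the inverse Fourier transform of $\zeta$, so that $\hat\psi = \zeta$; a direct substitution then identifies $\zeta_{j,k}$ as the Fourier transform of $D^{-j}T^k\psi$, where $T$ is translation by $1$ and $D$ is dilation by $2$, because the Fourier transform converts modulation into translation and conjugates dilation into inverse dilation. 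Since the Fourier transform is unitary on $L^2(\mathbb{R})$ by Plancherel's theorem, it carries the orthonormal basis $\{\zeta_{j,k}\}$ to the orthonormal basis $\{D^{-j}T^k\psi : j,k\in\mathbb{Z}\} = \{D^jT^k\psi : j,k\in\mathbb{Z}\}$, which is precisely the assertion that $\psi$ is a wavelet.

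The genuinely hard analytic content is already absorbed into Lemma \ref{Mallat}, namely that the $\mathcal{V}_n$ exhaust $L^2(\mathbb{R})$ (density) and intersect trivially (purity of $\pi(S_1)$). Granting that, I expect the only delicate point to be purely bookkeeping: keeping the normalizations and signs straight while tracking how the Fourier transform converts the dilation-modulation structure of $\{\zeta_{j,k}\}$ on the frequency side into the dilation-translation structure of the wavelet basis on the time side, so that the index ranges and scaling factors match the definition of a wavelet exactly.
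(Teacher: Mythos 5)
Your proposal is correct and follows essentially the same route as the paper: the decomposition $\bigoplus_{n\in\mathbb{Z}}\mathcal{W}_n = L^2(\mathbb{R})$ from Lemma \ref{Mallat}, the identification $\mathcal{W}_0 = R_1\pi(S_2)L^2(\mathbb{T})$ via the Cuntz relations as in equation (\ref{eq:Wzero}), the basis $\zeta_k = R_1\pi(S_2)e_{-k}$ propagated by $D^j$, and the final passage through the inverse Fourier transform. The only difference is that you spell out the Plancherel/modulation-to-translation bookkeeping that the paper leaves implicit, which is a harmless elaboration, not a divergence.
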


\section{Further Thoughts: Fractafolds}

As we have seen, the $C^{*}$-algebra $C^{*}(G)$ always contains
an isometry $S$ and a Cuntz family of isometries $\{ S_{i}\}_{i=1}^{n}$,
provided $\mathcal{X}$ has an orthonormal basis. Further, we may
construct the minimal unitary extension of either $S$ or of any of
the $S_{i}$ \emph{essentially within} $C^{*}(G)$. More accurately,
these objects are constructed in the multiplier algebra of the $C^{*}$-algebra
of a Morita equivalent groupoid that we denote by $G_{\infty}$. To
construct $G_{\infty},$ we form an analogue of the $2$-adic solenoid,
viz., the projective limit space
$X_{\infty}:=\{\underline{x}:=(x_{1},x_{2},\ldots)\,:\, T(x_{k+1})=x_{k}\}$,
and we set \[
G_{\infty}=\{(\underline{x},n-m,\underline{y})\,:\, T^{n}x_{1}=T^{m}y_{1}\}.\]
Then $G_{\infty}$ is a groupoid with unit space $X_{\infty}$ that
is Morita equivalent to $G$ in the sense of \cite{MRW87}. Note,
however, that $G_{\infty}$ is not $r$-discrete. It has many Haar
systems that are not in any evident way equivalent. Each transfer
operator $\mathcal{L}_{D}^{*}$ associated with a continuous function
$D$ as in equation (\ref{eq:Transfer}) determines a natural Haar
system on $G_{\infty}$ that reflects special features of $C^{*}(G)$.
In particular, if $D=\vert m\vert$ where $m$ is a unit vector in
$\mathcal{X}$ (so in particular if $m$ is part of an orthonormal
basis for $\mathcal{X}$), then the minimal unitary extension of
$S_{1}=\iota(m)S$
(in the notation of Theorem \ref{DeaconuIsoThm}) lives in the multiplier
algebra of $C^{*}(G_{\infty})$, when $G_{\infty}$ is endowed with
the Haar system determined by $\vert m\vert$.

We observe in passing that Dutkay and Jorgensen \cite{DJ06} associate
the $C^{*}$-crossed product $C(X_{\infty})\rtimes\mathbb{Z}$ to
the setting we have been discussing (where $\mathbb{Z}$ is viewed
as acting on $X_{\infty}$ through the homeomorphism $T_{\infty}$,
which is defined via the formula
$T_{\infty}(x_{1},x_{2},\ldots)=(Tx_{1},x_{1},x_{2},\ldots)$.)
This crossed product lies in the multiplier algebra of $C^{*}(G_{\infty})$.

Thus, our analysis shows that the study of wavelets can be broken
into two pieces. First, there are the structures that are intrinsic
to the geometric setting of a space $X$ with a local homeomorphism
$T$. These include the groupoid $G$ and its $C^{*}$-algebra, the
pseudogroup $\mathfrak{G}$, and the Deaconu correspondence $\mathcal{X}$.
These are the source of isometries and the Cuntz relations - assuming
$\mathcal{X}$ has an orthonormal basis. Each choice of orthonormal
basis gives Cuntz isometries in $C^{*}(G)$ that satisfy equation
(\ref{eq:inner}). Even if $\mathcal{X}$ fails to have an orthonormal
basis, $\mathcal{X}$ will always contain a (\emph{normalized tight})
\emph{frame} in the sense of Frank and Larson \cite[Definition 3.1]{FL99}
(also called a \emph{quasi-basis} in the sense of Watatani \cite{yW90}).
This is a collection of vectors $\{\psi_{i}\}_{i=1}^{n}$ such that
for every $\xi\in\mathcal{X},$
$\xi=\sum_{i=1}^{n}\psi_{i}\langle\psi_{i},\xi\rangle$and
$\langle\xi,\xi\rangle=\sum_{i=1}^{n}\langle\xi,\psi_{i}\rangle\langle\psi_{i},
\xi\rangle$.
Such a collection may be constructed easily with the aid of a partition
of unity subordinate to an open cover of $X$ such that $T$ is a
homeomorphism when restricted to each element of the cover. Much of
the analysis in $C^{*}(G)$ can be accomplished with a frame for $\mathcal{X}$.
The parameters involved in representing the Cuntz relations on Hilbert
space come from the representation theory of $C^{*}(G)$. Even constructing
the minimal unitary extension of $\pi(S_{1})$ involves ingredients
intrinsic to our setting. The groupoid $G_{\infty}$ is Morita equivalent
to $G$ and carries a natural Haar system that may be ``pegged'' to
$S_{1}$ - more accurately, a natural Haar system on $G_{\infty}$
can be constructed from each low pass filter. There will result a
natural multiresolution analysis in $L^{2}(X_{\infty},\tilde{\mu})$. 

To make contact with wavelet basis in $L^{2}(\mathbb{R}^{n})$ for
some $n$, which is the second piece in the study of wavelets, one
must have a mechanism for passing from $L^{2}(X_{\infty},\tilde{\mu})$
to $L^{2}(\mathbb{R}^{n})$. This involves a different set of tools.
In the final analysis, there may not be any naturally constructed
wavelet-like bases in $L^{2}(\mathbb{R}^{n})$ coming from a particular
space and local homeomorphism. One should not despair at this. Rather,
one should focus on building orthonormal bases in $W_{0}$ (the wandering
subspace in equation (\ref{eq:Wzero}) and then push them around to
form an orthonormal basis for all of $L^{2}(X_{\infty},\tilde{\mu})$
using the minimal unitary extension $U$ of $\pi(S_{1})$. After all,
$L^{2}(X_{\infty},\tilde{\mu})$ and the other spaces we have been
discussing are the \emph{naturally} occurring spaces adapted to $X$
and $T$. This effectively is what Dutkay and Jorgensen did in \cite{DJ05a}
and is similar to what Jorgensen and Pedersen did in \cite{JP98}.

We believe the proof of Theorem \ref{JLR} that we presented, which
is due to Larsen and Raeburn \cite{LR06}, can be tweaked to show
a bit more. The $2$-adic solenoid $\mathbb{T}_{\infty}$ is the dual
group of the $2$-adic numbers: the set of all rational numbers whose
denominators are powers of $2$, positive and negative. Since the
$2$-adic numbers form a dense subgroup of $\mathbb{R}$, $\mathbb{T}_{\infty}$
contains a dense copy of $\mathbb{R}$. We believe the measure $\tilde{\mu}$
is supported on this copy of $\mathbb{R}$ and is mutually absolutely
continuous with respect to Lebesgue measure transported there. The
mapping $R_{\infty}$ ought to be, then, just multiplication by (the
square root of) a suitable Radon-Nikodym derivative. 

In another direction, which we find very piquant, we can find Cuntz
families in the $C^{*}$-algebras or their multiplier algebras of
other groupoids that are Morita equivalent to $G$. This raises the
prospect of carrying out groupoid-like harmonic analysis using Cuntz
families of isometries on other spaces that Strichartz has called
\emph{fractafolds} - i.e. spaces that are locally like fractals \cite{rS03}.
The point is that under favorable circumstances $G$ is the groupoid
of germs of the pseudogroup $\mathfrak{G}$ of partial homeomorphisms
defined by $T$. We believe the pseudogroup of partial homeomorphisms
of a fractafold that is locally like $X$ will be Morita equivalent,
in a sense described by Renault in \cite[Section 3]{jR00}, to $\mathfrak{G}$.
This sense is based on work of Kumjian \cite{aK84} and Haefliger
\cite{aH85}. At this stage, however, there still is a lot of work
to do to substantiate this belief.

\end{document}